\documentclass[12pt,oneside]{amsart}

\usepackage{amssymb,amsxtra,mathtools}

\usepackage[
  paper=a4paper,
  headsep=20pt,
  footskip=18pt,
  text={160mm,240mm},
  centering,
  includehead
]{geometry}

\usepackage{newtxtext,newtxmath}

\usepackage[bookmarks,bookmarksdepth=2,pdfencoding=auto,psdextra]{hyperref}
\usepackage{enumitem}
\setlist[enumerate,1]{label=(\arabic*),font=\upshape}
\usepackage[capitalize,noabbrev]{cleveref}

\crefname{enumi}{}{}
\crefname{equation}{}{}
\usepackage{thmtools}
\usepackage{graphicx,xcolor}
\usepackage{tikz}
\usetikzlibrary{cd,calc,arrows,decorations.pathreplacing}
\tikzcdset{arrow style=math font}
\usetikzlibrary{arrows,spath3,intersections,nfold}
\makeatletter
\tikzset{
  offset now/.code=
    \pgfgetpath\tikz@temp\pgfsetpath\pgfutil@empty\pgfoffsetpath\tikz@temp{#1}
}
\makeatother

\usepackage[mode=buildnew]{standalone}
\usepackage{comment}

\iftrue
  \makeatletter
  \def\@settitle{%
    \begin{flushleft}%
      \LARGE\bfseries
      \strut\@title\strut
    \end{flushleft}%
  }
  \def\@setauthors{%
    \begingroup
    \def\thanks{\protect\thanks@warning}%
    \trivlist
    \raggedright
    \large \@topsep30\p@\relax
    \advance\@topsep by -\baselineskip
    \item\relax
    \author@andify\authors
    \def\\{\protect\linebreak}%
    \authors
    \ifx\@empty\contribs
    \else
    ,\penalty-3 \space \@setcontribs
    \@closetoccontribs
    \fi
    \normalfont
    \endtrivlist
    \endgroup
  }
  \def\@setaddresses{\par
    \nobreak \begingroup
    \small\raggedright
    \def\author##1{\nobreak\addvspace\smallskipamount}%
    \def\\{\unskip, \ignorespaces}%
    \interlinepenalty\@M
    \def\address##1##2{\begingroup
      \par\addvspace\bigskipamount\noindent
      \@ifnotempty{##1}{(\ignorespaces##1\unskip) }%
      {\ignorespaces##2}\par\endgroup}%
    \def\curraddr##1##2{\begingroup
      \@ifnotempty{##2}{\nobreak\noindent\curraddrname
        \@ifnotempty{##1}{, \ignorespaces##1\unskip}\/:\space
        ##2\par}\endgroup}%
    \def\email##1##2{\begingroup
      \@ifnotempty{##2}{\nobreak\noindent E-mail address%
        \@ifnotempty{##1}{, \ignorespaces##1\unskip}\/:\space
        \ttfamily##2\par}\endgroup}%
    \def\urladdr##1##2{\begingroup
      \def~{\char`\~}%
      \@ifnotempty{##2}{\nobreak\noindent\urladdrname
        \@ifnotempty{##1}{, \ignorespaces##1\unskip}\/:\space
        \ttfamily##2\par}\endgroup}%
    \addresses
    \endgroup
    \global\let\addresses=\@empty
  }
  \def\@setabstracta{%
    \ifvoid\abstractbox
    \else
    \skip@30pt \advance\skip@-\lastskip
    \advance\skip@-\baselineskip \vskip\skip@
    \box\abstractbox
    \prevdepth\z@ 
    \vskip-20pt
    \fi
  }
  \renewenvironment{abstract}{%
    \ifx\maketitle\relax
    \ClassWarning{\@classname}{Abstract should precede
      \protect\maketitle\space in AMS document classes; reported}%
    \fi
    \global\setbox\abstractbox=\vtop \bgroup
    \normalfont\small
    \list{}{\labelwidth\z@
      \leftmargin0pc \rightmargin\leftmargin
      \listparindent\normalparindent \itemindent\z@
      \parsep\z@ \@plus\p@
      
    }%
  \item[\hskip\labelsep\bfseries\abstractname.]%
  }{%
    \endlist\egroup
    \ifx\@setabstract\relax \@setabstracta \fi
  }

  \def\ps@headings{\ps@empty
    \def\@evenhead{%
      \setTrue{runhead}%
      \normalfont\scriptsize
      \rlap{\thepage}\hfill
      \def\thanks{\protect\thanks@warning}%
      \leftmark{}{}}%
    \def\@oddhead{%
      \setTrue{runhead}%
      \normalfont\scriptsize
      \def\thanks{\protect\thanks@warning}%
      \rightmark{}{}\hfill \llap{\thepage}}%
    \let\@mkboth\markboth
  }\ps@headings

  \expandafter\def\csname section\endcsname{\@startsection{section}{1}%
    \z@{-1.4\linespacing\@plus-.5\linespacing}{.8\linespacing\@plus.3\linespacing}%
    {\normalfont\bfseries\Large}}
  \expandafter\def\csname subsection\endcsname{\@startsection{subsection}{2}%
    \z@{-.8\linespacing\@plus-.3\linespacing}{.5\linespacing\@plus.3\linespacing}%
    {\normalfont\bfseries\large}}
  \expandafter\def\csname subsubsection\endcsname{\@startsection{subsubsection}{3}%
    \z@{-.7\linespacing\@plus-.2\linespacing}{.3\linespacing\@plus.2\linespacing}%
    {\normalfont\bfseries\normalsize}}
  \expandafter\def\csname paragraph\endcsname{\@startsection{paragraph}{4}%
    \z@{.7\linespacing\@plus.2\linespacing}{-1.5ex}%
    {\normalfont\itshape}}
  \def\@secnumfont{\bfseries}

  \renewcommand\contentsnamefont{\bfseries}
  \def\@starttoc#1#2{\begingroup
    \setTrue{#1}%
    \par\removelastskip\vskip\z@skip
    \@startsection{}\@M\z@{\linespacing\@plus\linespacing}%
    {.5\linespacing}{
      \contentsnamefont}{#2}%
    \ifx\contentsname#2%
    \else \addcontentsline{toc}{section}{#2}\fi
    \makeatletter
    \@input{\jobname.#1}%
    \if@filesw
    \@xp\newwrite\csname tf@#1\endcsname
    \immediate\@xp\openout\csname tf@#1\endcsname \jobname.#1\relax
    \fi
    \global\@nobreakfalse \endgroup
    \addvspace{32\p@\@plus14\p@}%
    \let\tableofcontents\relax
  }
  \def\contentsname{Contents}
  \def\l@section{\@tocline{2}{.5ex}{0mm}{5pc}{}}
  \def\l@subsection{\@tocline{2}{0pt}{2em}{5pc}{}}
  \makeatother

\fi

\def\to{\mathchoice{\longrightarrow}{\rightarrow}{\rightarrow}{\rightarrow}}
\makeatletter
\newcommand{\shortxra}[2][]{\ext@arrow 0359\rightarrowfill@{#1}{#2}}
\def\longrightarrowfill@{\arrowfill@\relbar\relbar\longrightarrow}
\newcommand{\longxra}[2][]{\ext@arrow 0359\longrightarrowfill@{#1}{#2}}
\renewcommand{\xrightarrow}[2][]{\mathchoice{\longxra[#1]{#2}}%
  {\shortxra[#1]{#2}}{\shortxra[#1]{#2}}{\shortxra[#1]{#2}}}

\makeatother

\makeatletter
\def\addtagsub#1{\let\oldtf=\tagform@\def\tagform@##1{\oldtf{##1}\hbox{$_{#1}$}}}
\makeatother

\makeatletter
\def\Nopagebreak{\@nobreaktrue\nopagebreak}
\makeatother

\foreach \n in 
{Z,Q,R,C}
{\expandafter\xdef\csname\n\endcsname{\noexpand\mathbb{\n}}}

\foreach \n in 
{A,B,F,S}
{\expandafter\xdef\csname b\n\endcsname{\noexpand\mathbb{\n}}}

\foreach \n in 
{A,...,Z}
{\expandafter\xdef\csname c\n\endcsname{\noexpand\mathcal{\n}}}

\foreach \n in 
{dim,rk,rank,sign,sgn,Map,Ker,Coker,Im,Hom,End,GL,SL,O,SO,
 Tor,Ext,Gal,Irr,dis,Wh,Hopf,tr,Tr,vol,colim,grk,hofiber,
 lk,st,Mod,Aut,SL,Dax,dax,FQ,Homeo,Diff,Emb,Imm,fImm,ks,
 CAT,TOP,DIFF,PL}
{\expandafter\xdef\csname\n\endcsname{\noexpand\operatorname{\n}}}

\foreach \n/\s in 
{inte/int}
{\expandafter\xdef\csname\n\endcsname{\noexpand\operatorname{\s}}}

\foreach \n in 
{id,pr,PI,coll}
{\expandafter\xdef\csname\n\endcsname{\noexpand\mathrm{\n}}}

\foreach \n/\s in 
{csum/\#,bcsum/\natural/}
{\expandafter\xdef\csname\n\endcsname{\noexpand\mathbin{\s}}}

\def\rel{\ifmmode\;{\mathgroup\symoperators rel}\;\else rel\ \fi}
\def\sm{\smallsetminus}

\declaretheoremstyle[
  name=\ignorespaces,numbered=no,notefont=\bfseries,notebraces={}{},bodyfont=\itshape
]{theorem-entitled}
\declaretheoremstyle[
  name=\ignorespaces,numbered=no,notefont=\bfseries,notebraces={}{},bodyfont=\normalfont
]{definition-entitled}

\declaretheorem[name=Theorem,refname={Theorem,Theorems}]{theoremalpha}
\declaretheorem[name=Corollary,sibling=theoremalpha]{corollaryalpha}

\declaretheorem[numberwithin=section]{theorem}
\foreach \n in {
  question,corollary,proposition,lemma,conjecture,observation
}{\declaretheorem[sibling=theorem]{\n}}
\foreach \n in {
  definition,example,remark,notation,case
}{\declaretheorem[style=definition,sibling=theorem]{\n}}
\foreach \n in {
  assertion,claim
}{\declaretheorem[numbered=no]{\n}}

\declaretheorem[style=theorem-entitled]{theorem-named}
\foreach \n in {
  definition-named,conjecture-named,case-named
}{\declaretheorem[style=definition-entitled]{\n}}

\numberwithin{equation}{section}

\def\labelignored#1{\ignorespaces}

\begin{document}

\title
[Smoothing topological isotopy of surfaces in 4-manifolds]
{Smoothing topological isotopy of surfaces in 4-manifolds with a boundary geometric dual}

\author{Jae Choon Cha}
\address{
  Center for Research in Topology\\
  POSTECH\\
  Pohang Gyeongbuk 37673\\
  Republic of Korea
}
\email{jccha@postech.ac.kr}

\author{Byeorhi Kim}
\address{
  Center for Research in Topology\\
  POSTECH\\
  Pohang Gyeongbuk 37673\\
  Republic of Korea
}
\email{byeorhikim@postech.ac.kr}



\begin{abstract}
  We prove that topological isotopy implies smooth isotopy for neat surfaces with common nonempty boundary in a smooth orientable 4-manifold, in the presence of a geometric dual in the boundary.
  Together with our earlier work on surface smoothing, this shows that the natural map from smooth isotopy classes to topological isotopy classes of such surfaces with prescribed boundary is bijective.
\end{abstract}

\maketitle

\section{Introduction}

Smooth and topological classifications can differ dramatically in dimension four. What geometric conditions force them to agree? For embedded surfaces, such agreement means that every topological isotopy class admits a smooth representative, unique up to smooth isotopy.

Our main result establishes this agreement for surfaces in 4-manifolds under a boundary geometric dual hypothesis.
It applies to surfaces of arbitrary genus, including nonorientable surfaces and surfaces with several boundary components, without stabilization or additional hypotheses on the fundamental group of the 4-manifold.

In what follows, surfaces are compact and connected, embeddings are neat, and isotopies of embeddings are rel boundary, i.e., the boundary is fixed pointwise.
Topological embeddings and topological isotopies are locally flat.

We say that a link $L$ in a 3-manifold $Y$ has a \emph{geometric dual} if there is an immersed sphere $G$ in $Y$ meeting $L$ transversely in exactly one point.
By the loop theorem, $G$ may be chosen to be embedded \cite[proof of Lemma~2.7]{Cha-Kim:2023-1}.

\begin{theoremalpha}
  \label{theorem:topological-isotopy-implies-smooth-isotopy}
  Let $M$ be a smooth orientable $4$-manifold with nonempty boundary.
  For any two smooth embeddings of a surface into $M$ that agree on the boundary and whose common boundary has a geometric dual in $\partial M$, topological isotopy implies smooth isotopy.
\end{theoremalpha}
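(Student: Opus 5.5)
Our plan is to reduce the statement to the known smooth classification of homotopic surfaces with a geometric dual, by way of an invariant computation. Let $f_0,f_1\colon \Sigma\to M$ be smooth embeddings agreeing on $\partial\Sigma$, and let $F\colon \Sigma\times I\to M\times I$ be a locally flat topological isotopy between them. Forgetting the isotopy, $f_0$ and $f_1$ are homotopic rel boundary. The common boundary has a geometric dual $G\subset\partial M$, and after pushing $G$ slightly into a collar we obtain an embedded sphere in $M$ meeting each $f_i(\Sigma)$ transversely in exactly one point. This is the setting of the light bulb type theorems of Gabai, Schneiderman--Teichner and Kosanovi\'c--Teichner, as extended to surfaces of arbitrary genus and nonorientable surfaces in our earlier smoothing work \cite{Cha-Kim:2023-1}. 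By those results, two smooth embeddings that are homotopic rel boundary and share a dual of this kind are smoothly isotopic rel boundary exactly when a complete obstruction vanishes. This obstruction is a Dax type invariant in a quotient of $\Z[\pi_1M]$, or of $\Z/2[\pi_1 M]$ (possibly twisted) in the nonorientable case, modulo an indeterminacy coming from $\pi_3$ and the self-intersection data of the dual. We take this criterion from the earlier results cited in the paper.

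It then suffices to show that this obstruction, which a priori is defined smoothly, depends only on the topological isotopy class and vanishes when $f_0$ and $f_1$ are topologically isotopic. We would proceed in three steps.

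First, we express the Dax invariant of the pair $(f_0,f_1)$ through a generic track of a homotopy $H\colon\Sigma\times I\to M\times I$. The invariant counts double point arcs and cusps, or equivalently the self-intersections of $H$ weighted by group elements, after correcting by the dual so that it becomes well defined. This description makes sense for topological generic immersions as well, using Freedman--Quinn transversality in dimension five, or in dimension four after taking slices.

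Second, we show that the count is a topological invariant. Two homotopies between $f_0$ and $f_1$ differ by an element of $\pi_1$ of the space of maps, and the resulting change in the count is exactly the indeterminacy subgroup. That subgroup is determined by homotopy theoretic data and by the dual, so it is the same in both categories.

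Third, we evaluate the count on the topological isotopy $F$ itself. Its track is an embedding, so it has no double points and the count is zero.

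The main obstacle is the second step: comparing the smooth Dax invariant with a topologically defined one. We need the topological isotopy to be approximable by a generic topological immersion of $\Sigma\times I$ into $M\times I$ whose singularities are controlled well enough to be counted. We would first try to use the fact that $M\times I$ minus a point admits a smooth structure agreeing with the given one near $M\times\partial I$, since 5-manifolds have no smoothing obstruction beyond $\ks$. The Kirby--Siebenmann invariant lies in $H^4(M\times I,\partial;\Z/2)$, and this group vanishes relative to the ends, so the track can be smoothed rel $M\times\partial I$ away from a point. We would then perturb $F$ to be smooth away from that point and absorb the point into a small ball, where it contributes trivially.

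The result is a smooth concordance, not an isotopy. Its Dax count is nevertheless still zero, because the concordance is an embedding, so Gabai type theorems promoting concordance to isotopy in the presence of a dual should finish the argument. If those theorems are not available, the fallback is to compute the invariant directly in the smoothed 5-manifold $M\times I$ and check that it agrees with the invariant for the standard product smoothing, by arguing that it depends only on homotopy data.
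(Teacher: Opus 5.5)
\textbf{There is a genuine gap: the obstruction theory your proof rests on is not available.} You assume a complete Dax-type criterion: two homotopic smooth surfaces of arbitrary genus (possibly nonorientable, with several boundary components) that share a boundary dual are smoothly isotopic exactly when an invariant vanishes. The cited literature does not provide this. Gabai and Schneiderman--Teichner treat spheres with a dual, and Kosanovi\'c--Teichner treat disks. The earlier Cha--Kim work extends to all surfaces only the light bulb \emph{smoothing} theorem, which gives existence of a smooth representative, i.e.\ surjectivity; its isotopy comparisons concern spheres and disks only.

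For positive genus, the homotopy-versus-isotopy problem also involves how the arcs cutting $S$ into a disk move, and how $S$ thickens those arcs. Already $\pi_1$ of arc embedding spaces differs from that of mapping spaces by Dax's invariant. Moreover $\pi_1\Map_\partial(S,M)$ is no longer $\pi_3M$. So neither the completeness of your invariant nor your description of its indeterminacy is available, and assuming such a classification amounts to assuming more than the theorem.

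The missing idea is the paper's reduction to disks. Cut $S$ along arcs $K$ with bicollar $U$. For $c=s,t$, use the restriction fibrations $\Emb^c(S,M\rel K)\to\Emb^c(S,M)\to\Emb^c(K,M)$ and $\Emb^c(S,M\rel U)\to\Emb^c(S,M\rel K)\to\Emb^c(U,M\rel K)$, and run five-lemma arguments. This needs only the following inputs:
\begin{enumerate}
\item $\pi_0$-injectivity and $\pi_1$-surjectivity of the smooth-to-topological map for arcs;
\item the computation $\pi_1\Emb^c(U,M\rel K)\cong\Z^{b_1(S)}$, compatibly in both categories, which rests on $\pi_2\TOP(4,1)=0$ and $\pi_2(\TOP(4,1)/\TOP(4,2))\cong\Z$;
\item the disk case.
\end{enumerate}

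\textbf{Your evaluation step is also misdirected.} $M\times I$ is already smooth, so Kirby--Siebenmann considerations for it are irrelevant. What is topological is the codimension-two track of the isotopy. Smoothing that track produces at best a concordance, and embeddedness of a concordance does not make your count vanish, since the projected homotopy to $M$ can have double points. You would then need a concordance-implies-isotopy theorem that you do not have.

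Where a Dax invariant does exist, as for disks, the natural argument is simpler. Approximate the topological isotopy rel ends by a $C^0$-close generic smooth homotopy. Its double points join nearby points of the surface, so they carry trivial group elements and contribute nothing in $\Z[\pi_1M\sm 1]$. This does not rescue the higher genus case without the missing classification.
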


Combined with the light bulb smoothing theorem of~\cite{Cha-Kim:2023-1}, our result yields the following agreement of the topological and smooth isotopy classifications.
For a surface $S$ and a smooth embedding $L$ of $\partial S$ into the boundary of a smooth 4-manifold $M$, write $\pi_0 \Emb^s_L(S,M)$ and $\pi_0 \Emb^t_L(S,M)$ for the sets of smooth and topological isotopy classes of embeddings of $S$ into $M$ that extend~$L$.
For the definitions of the embedding spaces $\Emb^s_L(S,M)$ and $\Emb^t_L(S,M)$, see \cref{subsection:embedding-spaces}.

\begin{corollaryalpha}
  \label{corollary:topological=smooth-for-surfaces}
  Let $M$ be a smooth orientable 4-manifold and let $S$ be a surface with nonempty boundary.
  Suppose that $L\colon \partial S \to \partial M$ is a smooth embedding with a geometric dual in~$\partial M$.
  Then the natural map $\pi_0 \Emb^s_L(S,M) \to \pi_0 \Emb^t_L(S,M)$ is a bijection.
\end{corollaryalpha}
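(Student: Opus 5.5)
The corollary is to be read as the conjunction of surjectivity and injectivity of the natural map $\pi_0 \Emb^s_L(S,M) \to \pi_0 \Emb^t_L(S,M)$. We will prove the two halves separately and then combine them.

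\emph{Surjectivity.} We will invoke the light bulb smoothing theorem of~\cite{Cha-Kim:2023-1}. Under the hypothesis that $L$ has a geometric dual in $\partial M$, that theorem says every topological (locally flat) embedding of $S$ extending $L$ is topologically isotopic rel boundary to a smooth embedding. Given a class in $\pi_0 \Emb^t_L(S,M)$, we pick a representative $f$ and apply the theorem. This gives a smooth $f'$ topologically isotopic to $f$ rel $\partial S$, so the class of $f'$ in $\pi_0 \Emb^s_L(S,M)$ maps to the class of $f$.

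Before applying it, we must check that its hypotheses match the present setting. If the earlier theorem was stated for orientable $M$ and surfaces with nonempty boundary, no further work is needed. One technical point is that the smooth structure near $\partial M$ is standard. Since $L$ is smooth, a topological embedding extending $L$ can first be isotoped rel $\partial S$ to be smooth near the boundary, using a collar and uniqueness of collars of $L$. This puts us in the setting where the smoothing theorem applies.

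\emph{Injectivity.} Suppose $f_0, f_1 \in \Emb^s_L(S,M)$ have the same image in $\pi_0 \Emb^t_L(S,M)$. Then they are smooth embeddings that agree on $\partial S$ and are topologically isotopic rel boundary. Their common boundary $L$ has a geometric dual in $\partial M$. This is exactly the situation of \cref{theorem:topological-isotopy-implies-smooth-isotopy}, so $f_0$ and $f_1$ are smoothly isotopic and define the same class in $\pi_0 \Emb^s_L(S,M)$. The nonempty-boundary hypotheses on $M$ and $S$ are automatic, since $L$ embeds $\partial S \neq \emptyset$ into $\partial M$.

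The main thing to verify is that the conventions for $\Emb^s_L$ and $\Emb^t_L$ (neatness, local flatness, isotopies fixing the boundary pointwise) agree with those used in \cref{theorem:topological-isotopy-implies-smooth-isotopy} and in the smoothing theorem of~\cite{Cha-Kim:2023-1}. The one place this needs care is collar conditions near $\partial M$, handled as above. Once the conventions are matched, the corollary follows by combining the two halves.
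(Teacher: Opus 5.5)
Your proposal is correct and matches the paper's proof: injectivity is exactly \cref{theorem:topological-isotopy-implies-smooth-isotopy}, and surjectivity is the light bulb smoothing theorem in its embedding form, \cref{theorem:light-bulb-smoothing-for-embedding}. Your preliminary step of making $f$ smooth near $\partial M$ is unnecessary, since that theorem only requires $f|_{\partial S}$ to be smooth with a geometric dual; the translation that is actually needed is from the unparametrized statement of \cite[Theorem~C]{Cha-Kim:2023-1} to embeddings, which uses that a surface homeomorphism restricting to a diffeomorphism on the boundary is topologically isotopic rel boundary to a diffeomorphism.
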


\begin{proof}
  Injectivity follows from \cref{theorem:topological-isotopy-implies-smooth-isotopy}.
  Surjectivity follows from the light bulb smoothing theorem \cite[Theorem~C]{Cha-Kim:2023-1} in the embedding form of \cref{theorem:light-bulb-smoothing-for-embedding} in this paper.
\end{proof}

The analogues hold for unparametrized surfaces in a 4-manifold.
See \cref{corollary:unparametrized-topological-isotopy-implies-smooth-isotopy,corollary:unparametrized-topological=smooth-for-surfaces}.

For disks, \cref{corollary:topological=smooth-for-surfaces} was established in \cite[Corollary~A]{Cha-Kim:2023-1}.

\paragraph{Related work}

For knots, links, and surfaces in a 3-manifold, smooth and topological isotopy give the same classification~\cite{Moise:1954-1}.
For smooth embeddings of a compact smooth $k$-manifold into a smooth $n$-manifold with $2n>3(k+1)$, topological isotopy implies smooth isotopy by Haefliger's work~\cite{Haefliger:1962/1963-1}.
For embeddings of surfaces, this range includes every ambient dimension $n \ge 5$.
However, for embeddings of high-dimensional manifolds, the implication fails in general.
For $r\ge 1$, Haefliger constructed smoothly nonisotopic submanifolds diffeomorphic to $S^{4r-1}$ in $S^{6r}$ which are topologically isotopic~\cite{Haefliger:1962-1}.

For surfaces in 4-manifolds, numerous examples of topologically isotopic but smoothly nonisotopic surfaces are known.
For instance, see work of Finashin, Kreck and Viro~\cite{Finashin-Kreck-Viro:1987-1,Finashin-Kreck-Viro:1988-1} (cf.\ \cite[\S1, footnote~2]{Baykur-Sunukjian:2016-1}), Kim and Ruberman~\cite{Kim-Ruberman:2008-1}, Auckly, Kim, Melvin and Ruberman \cite{Auckly-Kim-Melvin-Ruberman:2015-1}, Hayden and Sundberg~\cite{Hayden-Sundberg:2024-1} and Miyazawa~\cite{Miyazawa:2023-1}.
In particular, \cite[Theorem~1.1]{Hayden-Sundberg:2024-1} shows that the boundary geometric dual hypothesis in \cref{theorem:topological-isotopy-implies-smooth-isotopy} cannot be omitted in general.

By Gabai~\cite{Gabai:2020-1}, Schneiderman and Teichner~\cite{Schneiderman-Teichner:2022-1} and our earlier work \cite{Cha-Kim:2023-1}, topological isotopy implies smooth isotopy for spheres with a common smooth framed embedded geometric dual.
Further affirmative results, under suitable hypotheses, are obtained after stabilizing the surfaces or the ambient $4$-manifold.
See, for instance, Baykur and Sunukjian~\cite{Baykur-Sunukjian:2016-1}, Auckly, Kim, Melvin, Ruberman and Schwartz \cite{Auckly-Kim-Melvin-Ruberman-Schwartz:2019-1}, Galvin~\cite{Galvin:2024-1} and Galvin, Orson and Powell~\cite{Galvin-Orson-Powell:2026-1}.
Our result requires neither type of stabilization.

\paragraph{Proof strategy}

Choose a system of pairwise disjoint properly embedded arcs which cuts the surface into a disk.
Our main strategy is to reduce the surface isotopy problem to that of the resulting disk.
For this purpose, it is crucial to understand the smooth and topological behavior of the bicollars of the arcs throughout isotopy;
note that this is a codimension two problem, while the core arcs have codimension three.
Understanding isotopy of the core arcs is insufficient, since the relative disk problem depends on how the surface thickens the core arcs.
We carry out this comparison using restriction fibrations for relevant embedding spaces and the associated homotopy exact sequences.
The resulting low-degree comparisons between smooth and topological embedding spaces allow us to apply the smooth-topological comparison result for disks with boundary geometric dual in~\cite[Corollary~A]{Cha-Kim:2023-1}.

An essential local problem is to understand one-parameter families of thickenings of a fixed arc to 2-dimensional bicollars.
At each point of the arc, the smooth local model has homotopy
type $O(3)/O(2)\cong S^2$, while the topological model is
$\TOP(4,1)/\TOP(4,2)$.
Here $\TOP(n,m)$ is the group of homeomorphisms of $\R^n$ fixing the standard subspace $\R^m=\R^m\times\{0\}\subset \R^n$ pointwise, equipped with the compact-open topology.
We establish that the natural comparison map $S^2\cong O(3)/O(2) \to \TOP(4,1)/\TOP(4,2)$ induces an isomorphism on $\pi_2$ in~\cref{lemma:pi_2TOP(4_1)/TOP(4_2)}.
A key ingredient for this is the vanishing $\pi_2\TOP(4,1)=0$, which we prove in~\cref{lemma:pi2TOP(4_1)}.
To our knowledge, this vanishing has not previously been recorded in the literature.

\subsubsection*{Organization of the paper}

In \cref{section:preliminaries-on-embeddings}, we discuss preliminaries on smooth and topological embedding spaces and on arcs in 4-manifolds.
In \cref{section:topological-stiefel-spaces}, we compute the second homotopy groups of $\TOP(4,1)$ and $\TOP(4,1)/\TOP(4,2)$.
In \cref{section:embeddings-of-surfaces}, we prove \cref{theorem:topological-isotopy-implies-smooth-isotopy} using results in the previous sections.

\subsubsection*{Acknowledgements}

We thank Jianfeng Lin, Boyu Zhang and Yi Xie for helpful discussions on surface embeddings.
This work was supported by the National Research Foundation grant RS-2019-NR039996.
During the preparation of this paper, we used generative AI tools for copyediting and linguistic refinement.
The authors take full responsibility for all content of this paper.

\section{Preliminaries on embedding spaces}
\label{section:preliminaries-on-embeddings}

\subsection{Embedding spaces and restriction fibration}
\label{subsection:embedding-spaces}

In this subsection, we fix our conventions for embedding spaces and recall the restriction fibrations and smooth-topological comparison maps.

\paragraph{Smooth embedding spaces}

Let $V=(V;\partial_-V,\partial_+V)$ be a compact smooth manifold triad:
$V$ is a smooth manifold equipped with a boundary decomposition $\partial V=\partial_-V\cup\partial_+V$ into faces $\partial_-V$ and $\partial_+V$, with corner $\partial_-V\cap\partial_+V=\partial(\partial_-V)=\partial(\partial_+V)$.
Unless stated otherwise, we regard a compact smooth manifold with boundary as a triad with $\partial_-V=\partial V$ and $\partial_+V=\emptyset$.

Let $M$ be a smooth manifold, possibly with boundary.
We assume that embeddings $f\colon V\hookrightarrow M$ are neat along $\partial_-V$, i.e., $f^{-1}(\partial M)=\partial_-V$ and $f$ is transverse to $\partial M$ along $\partial_-V$, including at corner points.

Fix a reference embedding $V\hookrightarrow M$ and identify $V$ with its image, viewing $V\subset M$ as a subspace.
For a subset $A\subset V$, define $\Emb^s(V,M\rel A)$ to be the space of smooth embeddings $f$ such that $f|_{\partial_-V\cup A}=\id_{\partial_-V\cup A}$, equipped with the $C^\infty$ topology.
The interior of $\partial_+V$ may move in the interior of $M$.
Write $\Emb^s(V,M)=\Emb^s(V,M\rel\emptyset)$.

When only a smooth embedding $L\colon\partial_-V\hookrightarrow\partial M$ is prescribed without specifying a reference embedding $V\hookrightarrow M$, define $\Emb^s_L(V,M)$ to be the space of smooth embeddings $f\colon V\hookrightarrow M$ such that $f|_{\partial_-V}=L$, equipped with the $C^\infty$ topology.
This is the smooth embedding space used in \cref{corollary:topological=smooth-for-surfaces}.

\paragraph{Topological embedding spaces}

We use a simplicial model of locally flat embeddings, following the approach in \cite[Appendix~I]{Burghelea-Lashof-Rothenberg:1975-1}.

Let $V=(V;\partial_-V,\partial_+V)$ be a compact topological manifold triad, with local charts respecting the faces and corner, and let $M$ be a topological manifold, possibly with boundary.
We assume that an embedding $f\colon V\hookrightarrow M$ is neat along $\partial_-V$ and locally flat.
We say that $f$ is \emph{neat along $\partial_-V$} if $f^{-1}(\partial M)=\partial_-V$.
Such an embedding is \emph{locally flat} if it admits the standard coordinate models at points in $\inte V$, $\inte\partial_-V$, $\inte\partial_+V$ and $\partial(\partial_-V)$.
See Definition~1.2 in Appendix~I and the remark on p.~113 of \cite{Burghelea-Lashof-Rothenberg:1975-1} for these local models.

Fix a reference embedding $V\hookrightarrow M$ satisfying the preceding conditions, and identify $V$ with its image.
For a subset $A\subset V$, let $\Emb^t_\bullet(V,M\rel A)$ be the simplicial set whose $q$-simplices are level-preserving, locally flat $\Delta^q$-families of embeddings $F\colon V\times\Delta^q\hookrightarrow M\times\Delta^q$, such that if we write $F(x,t)=(f_t(x),t)$, each $f_t$ is neat along $\partial_-V$ and satisfies $f_t|_{\partial_-V\cup A}=\id_{\partial_-V\cup A}$.
The simplicial operators are given by pullback along the standard affine maps between simplices.
For the definition of local flatness for $\Delta^q$-families, refer to \cite[Appendix~I, condition~(2.8) and Section~2(c)]{Burghelea-Lashof-Rothenberg:1975-1} and \cite[p.~64]{Edwards-Kirby:1971-1}, adapted using the face and corner models.
See also \cite[Definitions~A.1 and~A.3]{Gomez-Lopez-Kupers:2022-1} for the special case of $\partial_+V=\emptyset$.

Define $\Emb^t(V,M\rel A)=|\Emb^t_\bullet(V,M\rel A)|$, the geometric realization.
This is the topological embedding space used in this paper.
Its path components are the rel $\partial_-V\cup A$ locally flat isotopy classes of locally flat embeddings.
When $A=\emptyset$, write $\Emb^t_\bullet(V,M)$ and $\Emb^t(V,M)$ for brevity.

For a locally flat embedding $L\colon\partial_-V\hookrightarrow\partial M$, define $\Emb^t_{L,\bullet}(V,M)$ by the same construction with $A=\emptyset$, replacing the boundary condition $f_t|_{\partial_-V}=\id_{\partial_-V}$ by $f_t|_{\partial_-V}=L$, without fixing a reference embedding $V\hookrightarrow M$.
Let $\Emb^t_L(V,M)=|\Emb^t_{L,\bullet}(V,M)|$.
This is the topological embedding space used in \cref{corollary:topological=smooth-for-surfaces}.

\paragraph{Restriction fibrations}

Let $V$ be a compact manifold triad, and fix a locally flat reference embedding $V\subset M$ that is neat along $\partial_-V$.
Suppose that $A\subset V$ is a compact manifold triad satisfying $\partial_-A=A\cap\partial_-V$.
We assume that, at each point of $A$, there is a single ambient chart in which $V$ and $A$ are given by standard subsets defined by coordinate vanishing and nonnegativity conditions.
In the smooth category, the manifolds, reference embedding, and these charts are assumed to be smooth.
Let $B\subset A$ be any subset.

In the smooth category, the restriction map $\Emb^s(V,M\rel B)\to\Emb^s(A,M\rel B)$ is a Serre fibration \cite[Appendix~I, Theorem~4.1 and Remark~(b)] {Burghelea-Lashof-Rothenberg:1975-1};
see also \cite[Chapter~II, Sections~2.2.1--2.2.2, Theorem~5 and Corollary~2] {Cerf:1961-1}.
The fiber over the reference inclusion $\id_A$ is
$\Emb^s(V,M\rel A)$.

In the topological category, restriction gives rise to a Kan fibration $\Emb^t_\bullet(V,M\rel B) \to \Emb^t_\bullet(A,M\rel B)$ with fiber $\Emb^t_\bullet(V,M\rel A)$ over~$\id_A$.
When $B=\emptyset$, this follows from parametrized isotopy extension \cite[Theorem~6.17]{Siebenmann:1972-1}, by adapting the argument preceding \cite[Theorem~A.4]{Gomez-Lopez-Kupers:2022-1} to our models with faces and corner.
The general rel $B$ case follows from the $B=\emptyset$ case without requiring additional assumptions on~$B$, since the rel $B$ restriction map is the pullback of $\Emb^t_\bullet(V,M) \to \Emb^t_\bullet(A,M)$ along $\Emb^t_\bullet(A,M\rel B)\hookrightarrow\Emb^t_\bullet(A,M)$.
We also remark that when $\partial_+V=\partial_+A=\emptyset$, the $B=\emptyset$ case is covered by \cite[Appendix~I, Theorem~4.15]{Burghelea-Lashof-Rothenberg:1975-1}.
It follows that geometric realization gives a Serre fibration $\Emb^t(V,M\rel B)\to\Emb^t(A,M\rel B)$ with fiber $\Emb^t(V,M\rel A)$, by \cite[Theorem~I.10.10 and Proposition~I.2.4]{Goerss-Jardine:1999-1}.

Consequently, for $c=s$,~$t$, restriction gives rise to a fiber sequence
\[
  \Emb^c(V,M\rel A) \to \Emb^c(V,M\rel B) \to \Emb^c(A,M\rel B)
\]
and the associated homotopy long exact sequence.
All embedding spaces are based at the reference inclusions.

\paragraph{Comparison maps}

Suppose that $V$ and $M$ are smooth, and $A\subset V$ is a triad satisfying the hypothesis for the restriction fibration.
Define $\Emb^s_\bullet(V,M \rel A)$ by requiring the embedding $V\times\Delta^q \hookrightarrow M\times\Delta^q$ in the definition of $\Emb^t_\bullet(V,M \rel A)$ to be smooth.
Then there is a natural simplicial map $j\colon \Emb^s_\bullet(V,M \rel A) \to \Emb^t_\bullet(V,M \rel A)$.
Furthermore, the evaluation map $e\colon |\Emb^s_\bullet(V,M \rel A)|\to\Emb^s(V,M \rel A)$ defined by $[F,t]\mapsto F_t$, where $F(x,t)=(F_t(x),t)$, is a weak homotopy equivalence by the proof of \cite[Appendix I, Proposition~6.1]{Burghelea-Lashof-Rothenberg:1975-1}, applied relative to $\partial_-V \cup A$.
Thus, $(|j|)_*\circ(e_*)^{-1}$ defines a natural comparison map $\pi_k\Emb^s(V,M \rel A)\to\pi_k\Emb^t(V,M \rel A)$.

For the restriction fiber sequence associated with $B\subset A\subset V$, the comparison maps are compatible with the homotopy long exact sequences when each of $(V,A)$, $(V,B)$, and $(A,B)$ satisfies the hypothesis for the comparison map.

\begin{remark}[Collar boundary conditions]
  \label{remark:collar-boundary-conditions}
  In our convention, embeddings of $V$ agree with the reference inclusion on $\partial_-V$ pointwise.
  It is also useful to consider embedding spaces requiring agreement on a collar.
  Choose collars of $\partial_-V$ and $\partial M$ compatible with the reference inclusion and the face structures.
  For $c=s$,~$t$, let $\Emb^c_{\coll,\bullet}(V,M)$ be the simplicial subset of $\Emb^c_\bullet(V,M)$ consisting of families that agree with the reference inclusion on some subcollar, whose width is uniform over each parameter simplex.
  Write $\Emb^c_{\coll}(V,M)=|\Emb^c_{\coll,\bullet}(V,M)|$.

  For $c=s$, define $\Emb^s_{\coll}(V,M)\to\Emb^s(V,M)$ to be the composition of the inclusion into $|\Emb^s_\bullet(V,M)|$ and the evaluation map~$e$.
  For $c=t$, the simplicial inclusion induces $\Emb^t_{\coll}(V,M)\to\Emb^t(V,M)$.
  These comparison maps are weak homotopy equivalences.
  For $c=s$, it follows by combining the arguments of \cite[Proposition~2.9]{Kosanovic-Teichner:2024-1} and \cite[Appendix~I, Proposition~6.1]{Burghelea-Lashof-Rothenberg:1975-1} and passing to shrinking collar widths.
  For $c=t$, apply the collar-insertion argument of \cite[proof of Theorem~2.21]{Friedl-Nagel-Orson-Powell:2025-1} to finite simplicial parameter pairs; see \cite[proof of Lemma~2.2, step~(i)]{Kupers:2015-1} for a parametrized version.

  For a triad $A\subset V$ satisfying the hypothesis for the restriction fibration, choose compatible collars and define $\Emb^c_{\coll}(V,M\rel A)$ similarly, fixing $A$ pointwise.
  The parametrized isotopy extension argument applied relative to a small common boundary collar yields a homotopy fiber sequence $\Emb^c_{\coll}(V,M\rel A)\to\Emb^c_{\coll}(V,M)\to\Emb^c_{\coll}(A,M)$.
  Comparing this with the corresponding homotopy fiber sequence for $\Emb^c$, it follows that $\Emb^c_{\coll}(V,M\rel A)\to\Emb^c(V,M\rel A)$ is a weak homotopy equivalence.
  These comparison maps are compatible with restriction and with smooth-to-topological comparison.
\end{remark}

\subsection{Arcs in 4-manifolds}

Let $M$ be a smooth $4$-manifold with nonempty boundary.
Let $K=\bigsqcup_{i=1}^m K_i$ be the disjoint union of $m$ unit intervals $K_i \cong I$.
Fix a smooth neat embedding $\iota\colon K \hookrightarrow M$ and view $K\cong \iota(K)\subset M$ as a submanifold of~$M$.
Let $\Emb^s(K,M)$ and $\Emb^t(K,M)$ be the smooth and topological embedding spaces defined as above, viewing $K$ as a triad with $\partial_-K = \partial K$ and $\partial_+K=\emptyset$.

\begin{lemma}
  \label{lemma:pi0-arcs-comparison}
  The map $\pi_0\Emb^s(K,M)\to\pi_0\Emb^t(K,M)$ is injective.
\end{lemma}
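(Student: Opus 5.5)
The plan is to reduce both $\pi_0\Emb^s(K,M)$ and $\pi_0\Emb^t(K,M)$ to homotopy-theoretic data and compare them. Arcs in a 4-manifold have codimension three. In the smooth category, two neat arcs with the same endpoints are smoothly isotopic rel boundary if and only if they are homotopic rel boundary. Indeed, a generic homotopy $I\times I\to M^4$ between 1-dimensional objects is an embedding away from finitely many isolated double-point times: at a generic time the track meets itself in dimension $2\cdot 2-4=0$, so a one-parameter family of arcs crosses itself only at isolated instants. Each such crossing can be removed by a finger move, or more simply by general position of $2$-parameter families, since $1+1+1<4$.

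For several components, the same argument applies to the whole family $K=\bigsqcup K_i$. A generic homotopy $H\colon K\times I\to M$ rel $\partial K$ has its track of dimension $2$ in a $4$-manifold. The condition that $H_t$ fail to be injective at some time $t$ is of codimension $4-1=3$ in the triple $(x,y,t)$, and the triples form a $3$-dimensional space, so there are generically finitely many instants where two points of $K$ collide. The standard approach is to replace the homotopy by a regular homotopy, which is generic. In codimension $3$, regular homotopy classes of arcs rel boundary coincide with homotopy classes, because $\pi_1$ of the Stiefel manifold $V_{4,1}\simeq S^3$ vanishes. One then observes that self-crossings of arcs in dimension $4$ can be avoided altogether: the double-point set of a generic $2$-parameter map into a $4$-manifold is $0$-dimensional, but we are really asking the track $K\times I\to M\times I$ to be an embedding, which holds generically since $2\cdot 2<5$. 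So any homotopy rel boundary is homotopic to a smooth isotopy, giving
\[
\pi_0\Emb^s(K,M)\cong[K,M]_{\partial}
\]
via the forgetful map to homotopy classes rel $\partial K$.

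Next I check that this forgetful map factors through topological isotopy. The map $\pi_0\Emb^s(K,M)\to[K,M]_\partial$ is the composite of the comparison map with the forgetful map $\pi_0\Emb^t(K,M)\to[K,M]_\partial$. This second map is well defined because a locally flat isotopy rel boundary is in particular a homotopy rel boundary. Concretely, a path in $|\Emb^t_\bullet(K,M)|$ comes from a locally flat level-preserving family $K\times I\hookrightarrow M\times I$, and composing with the projection gives a homotopy of maps. Since the composite $\pi_0\Emb^s\to\pi_0\Emb^t\to[K,M]_\partial$ is the bijection above, the first factor is injective.

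The main obstacle is the first step: making the smooth general position argument fully rigorous rel boundary and for multiple components. The cleanest route is to take a smooth homotopy $H\colon K\times I\to M$ rel $\partial K$, constant near $\partial K\times I$ and near $K\times\{0,1\}$. One then perturbs the level-preserving map $(x,t)\mapsto(H(x,t),t)$ into $M\times I$ rel these regions to an embedding, using $\dim(K\times I)=2$ and $2\cdot 2<5$. What remains is to ensure that each level is an embedding. I would first try the jet transversality argument: the bad sets, where the differential in the $K$-direction vanishes or where two points collide at the same time, have codimensions $4$ and $4$ in parameter spaces of dimensions $2$ and $3$ respectively, so a generic perturbation avoids them. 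Neatness near $\partial M$ is preserved by working rel a collar, as allowed by \cref{remark:collar-boundary-conditions}.
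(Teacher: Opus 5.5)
Your proposal is correct and is essentially the paper's argument. A topological isotopy rel $\partial K$ is in particular a homotopy rel $\partial K$, and relative general position then makes every slice an embedding: the level-wise condition $d(f_t)_x=0$ has codimension $4$ on the $2$-dimensional $K\times I$, and $H(x,t)=H(y,t)$ with $x\neq y$ has codimension $4$ on the $3$-dimensional space of triples $(x,y,t)$. Factoring through $[K,M]_\partial$ is just a repackaging of this.

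The only slip is in your second paragraph. There you give the same-time collision condition codimension $4-1=3$ and predict finitely many collision instants. The correct codimension is $4$, so generically there are no same-time collisions at all, as your final paragraph and the paper state.
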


\begin{proof}
  Suppose that $f_0$, $f_1\colon K\hookrightarrow M$ are smooth embeddings representing the same element of $\pi_0\Emb^t(K,M)$.
  Then there is a homotopy $F\colon K\times I\to M$ rel $\partial K$ between $f_0$ and $f_1$, with $F^{-1}(\partial M)=\partial K\times I$.
  By a local adjustment and relative smooth approximation, we may assume that $F$ is smooth and each slice $f_t(x)=F(x,t)$ is a neat embedding near~$\partial K$.

  By relative general position, we may further assume that each $f_t$ is an immersion and that $F(x,t)\ne F(y,t)$ whenever $x\ne y$.
  Indeed, the condition $d(f_t)_x=0$ has codimension~$4$, whereas $K\times I$ has dimension~$2$.
  The condition $F(x,t)=F(y,t)$ for $x\ne y\in K\sm \partial K$ has codimension~$4$, whereas the space of such triples $(x,y,t)$ has dimension~$3$.

  Since $K$ is compact and since $f_t$ is an injective immersion, $f_t$ is a smooth embedding for each~$t$.
  Therefore, $\{f_t\}$ is a smooth isotopy rel $\partial K$ between $f_0$ and~$f_1$.
\end{proof}

\begin{lemma}
  \label{lemma:pi1-arcs-comparison}
  The map $\pi_1\Emb^s(K,M)\to\pi_1\Emb^t(K,M)$ is surjective.
\end{lemma}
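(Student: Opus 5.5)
The plan is to show that every element of $\pi_1\Emb^t(K,M)$ is represented by a map into the topological embedding space that comes from a smooth loop of embeddings. This parallels the proof of \cref{lemma:pi0-arcs-comparison}, one parameter up: arcs in a $4$-manifold have enough codimension that a generic smooth $2$-parameter family of maps of arcs is already a family of embeddings.

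Let a class in $\pi_1\Emb^t(K,M)$ be represented by a loop based at $\iota$. Via the simplicial model, this gives a locally flat family $F\colon K\times S^1\to M$ with $F_\theta|_{\partial K}=\iota|_{\partial K}$, $F_{\theta_0}=\iota$, and each $F_\theta$ a locally flat neat embedding. By \cref{remark:collar-boundary-conditions} we may assume that each $F_\theta$ agrees with $\iota$ on a fixed collar of $\partial K$. There $F$ is already smooth, and $F^{-1}(\partial M)=\partial K\times S^1$.

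Next, apply relative smooth approximation to $F$ as a continuous map $K\times S^1\to M$. This is rel the collar region and rel the base slice $\theta_0$, and gives a smooth map $F'$ that is $C^0$-close to $F$ and homotopic to it rel these sets by a small homotopy. Then perturb $F'$ into general position rel the same sets, using the dimension counts:
\begin{itemize}
  \item The condition $d(F'_\theta)_x=0$ has codimension $4$ in a parameter space $K\times S^1$ of dimension $2$, so it is avoided.
  \item The condition $F'(x,\theta)=F'(y,\theta)$ with $x\ne y$ has codimension $4$ in a space of triples of dimension $3$, so it is avoided.
\end{itemize}
Near $\partial K$ nothing needs to be done, since the family is fixed there. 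Hence each $F'_\theta$ is an injective immersion of a compact manifold, so it is a smooth neat embedding, and $\theta\mapsto F'_\theta$ is a smooth loop in $\Emb^s(K,M)$ based at $\iota$.

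The main obstacle is to show that the comparison map sends $[F']$ to $[F]$ in $\pi_1\Emb^t(K,M)$. The straight-line (or small) homotopy between $F$ and $F'$ passes through maps, not necessarily embeddings. So we do not use it directly. Instead we use that $F$ and $F'$ are $C^0$-close families of locally flat embeddings. I would first try to invoke the parametrized local contractibility of homeomorphism groups (Edwards--Kirby, Siebenmann's parametrized isotopy extension already used for the restriction fibrations). This gives, for a sufficiently $C^0$-close pair, an $S^1$-family of ambient homeomorphisms near the identity, rel $\partial M$ and the collar, carrying $F_\theta$ to $F'_\theta$. The family is contractible to the identity, and the contraction yields a homotopy of loops in $\Emb^t(K,M)$.

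If that closeness argument is too delicate, the fallback is to run the homotopy $F\simeq F'$ as a map $K\times S^1\times I\to M$, made smooth on the interior by approximation. One then needs to remove the level-wise double points, which now have dimension $3+1-4=0$ in the relevant count. These isolated self-intersections would have to be handled by finger moves in the topological category. For that reason the closeness/isotopy-extension route is the one I would commit to.
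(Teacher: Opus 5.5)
\textbf{There is a genuine gap at the step you flag as the main obstacle.} Your smoothing step is fine. The dimension count of \cref{lemma:pi0-arcs-comparison}, one parameter up, shows that a generic smooth approximation $F'$ of $F$ is a smooth loop of neat embeddings, $C^0$-close to $F$.

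The problem is showing $[F']=[F]$, and the tools you cite do not do this. Local contractibility of $\Homeo(M)$ takes homeomorphisms near the identity as input. Parametrized isotopy extension takes an already existing locally flat isotopy as input. Neither turns $C^0$-closeness of two locally flat embeddings into a small ambient homeomorphism carrying one to the other. Edwards--Kirby does handle $C^0$-close \emph{codimension-zero} embeddings, but closeness of the cores $F_\theta$ and $F'_\theta$ says nothing about closeness of tubular neighborhood maps around them.

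No such general principle holds. In codimension two, arbitrarily $C^0$-close embeddings can be non-isotopic: tie a tiny local knot. Also, a compact-open path of locally flat embeddings need not be a locally flat family, which is why the simplicial model is used.

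What closeness actually gives you is containment. Let $h_\theta$ be an ambient isotopy covering $F|_{[0,1]}$ and let $N$ be a tubular neighborhood of $\iota(K)$. For $F'$ close enough to $F$, you get $F'_\theta(K)\subset h_\theta(N)$ for all $\theta$. The remaining question is whether $h_\theta^{-1}\circ F'_\theta$ can be isotoped to $\iota$ inside $N\cong K\times\R^3$, coherently in $\theta$. That is a question about $\pi_0$ and $\pi_1$ of $\Emb^t(K,N)$, and your argument never addresses it.

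\textbf{The missing idea is this local input.} You need $\Emb^t(D^1,D^1\times\inte D^3)$, rel boundary, to be connected and simply connected. This is a nontrivial fact. The paper takes it from the contractibility of $\Emb^t_{\coll}(D^1,D^4)$ due to Salvatore--Turchin, via \cref{remark:collar-boundary-conditions}.

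Once you have it, your $F'$ can be used as follows:
\begin{enumerate}
\item Subdivide $0=t_0<\dots<t_r=1$ so that $F_\theta(K)$ and $F'_\theta(K)$ lie in $U_j=h_{t_j}(N)$ for $\theta\in[t_{j-1},t_j]$.
\item Join $F_{t_j}$ to $F'_{t_j}$ inside a smaller neighborhood $C_j\subset U_j\cap U_{j+1}$, using $\pi_0\Emb^t(K,C_j)=0$.
\item Use simple connectivity of $\Emb^t(K,U_j)$ to match the pieces in the fundamental groupoid.
\end{enumerate}
This is the paper's argument. The only difference is that the paper builds its smooth loop piecewise inside the $U_j$ rather than by a global approximation, which is inessential.

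Your fallback does not avoid the issue. Isolated double points of a $2$-parameter family of arcs in a $4$-manifold are exactly where Dax-type obstructions live, and they cannot be cancelled in general. Moreover, one end of that homotopy is only locally flat, so you would again need local information about topological arc spaces.
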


\begin{proof}
  Let $\alpha \in \pi_1\Emb^t(K,M)$ be an arbitrary element.
  Since $\Emb^t_\bullet(K,M)$ is a Kan complex \cite[Lemmas~3.5 and~3.6]{Kupers:2015-1}, $\alpha=[f]$ for some $1$-simplex $f=\{f_t\colon K\to M\}_{0\le t\le 1}$ with $f_0 = f_1 = \id_K$.
  By isotopy extension \cite{Edwards-Kirby:1971-1}, \cite[Theorem~3.9 and Remark~3.10]{Kupers:2015-1}, there is an ambient isotopy $\{h_t\colon M \to M\}_{0\le t\le 1}$ of $M$ rel~$\partial M$ such that $h_0=\id$ and $h_t|_K = f_t$.

  For the smooth arcs $f_0(K)$, choose a smooth open tubular neighborhood $N=\bigsqcup_{i=1}^m N_i$, where $N_i \cong K_i\times \R^3$.
  Let $U_t = h_t(N)$, which is a (topological) tubular neighborhood of the topological arcs~$f_t(K)$.

  By compactness, we can choose $0=t_0<t_1<\cdots<t_r=1$ such that $f_t(K)\subset U_{t_j}$ for $t\in [t_{j-1},t_j]$.
  Write $U_j = U_{t_j}$ for brevity.
  For $1\le j\le r-1$, choose a smaller topological open tubular neighborhood $C_j \subset U_j\cap U_{j+1}$ of~$f_{t_j}(K)$.
  Since each component of $C_j$ is connected, we can choose a smooth neat embedding $g_j\colon K\hookrightarrow C_j$ such that $g_j|_{\partial K} = \id_{\partial K}$.
  Let $g_0=f_0$ and $g_r=f_1$.

  For each $j$, both $g_{j-1}$ and $g_j$ lie in~$U_j$, and each component of $U_j$ is simply connected.
  Thus, $g_{j-1}$ and $g_j$ are homotopic rel~$\partial K$ in~$U_j$, and by perturbing a homotopy, one obtains a smooth isotopy $\gamma_j\colon g_{j-1}\approx g_j$ in~$U_j$, rel~$\partial K$.
  After making each $\gamma_j$ stationary near the parameter endpoints by reparametrization, the composition $\gamma=\gamma_1*\cdots*\gamma_r$ is a smooth loop in $\Emb^s(K,M)$ based at~$\id_K$.

  It is known that the space $\Emb^t_{\coll}(D^1, D^4)$ is contractible \cite[Lemma~3.1 and Proposition~3.4(b)]{Salvatore-Turchin:2026-1}.
  Thus, by \cref{remark:collar-boundary-conditions}, $\Emb^t(D^1,D^4)$ is contractible.
  Note that $\Emb^t(D^1,D^4) \cong \Emb^t(D^1,D^1\times D^3) \cong \Emb^t(D^1,D^1\times\inte(D^3))$.
  Thus, $\Emb^t(K,C_j) \cong \prod^m \Emb^t(D^1,D^1\times \inte(D^3))$ is contractible.
  Similarly, $\Emb^t(K,U_j)$ is contractible.

  Since $\pi_0 \Emb^t(K,C_j) = 0$, we may choose locally flat isotopies $\lambda_j \colon f_{t_j} \approx g_j$ rel $\partial K$ in $C_j$ for each $j=1,\ldots,r-1$.
  Let $\lambda_0$ and $\lambda_r$ be constant isotopies.

  Let $a_j$ be the affine reparametrization of the restriction $f|_{[t_{j-1},t_j]}$.
  We have $[f]=[a_1]\cdots[a_r]$ in the fundamental groupoid of $\Emb^t(K,M)$.
  For each $j$, the loop $a_j*\lambda_j*(\gamma_j)^{-1}* (\lambda_{j-1})^{-1}$ lies in $\Emb^t(K,U_j)$, which is simply connected.
  Thus, we have $[a_j] = [\lambda_{j-1}*\gamma_j*(\lambda_j)^{-1}]$ in the fundamental groupoid of $\Emb^t(K,U_j)$.
  Multiplying these identities yields $[f]=[\gamma]$ in $\pi_1\Emb^t(K,M)$.
  It follows that $[f]$ lies in the image of $\pi_1\Emb^s(K,M)\to\pi_1\Emb^t(K,M)$.
\end{proof}

\begin{remark}
  Using similar methods, one can also show that $\pi_0 \Emb^s(K,M) \to \pi_0 \Emb^t(K,M)$ is a bijection and $\pi_1 \Emb^s(K,M) \to \pi_1 \Emb^t(K,M)$ is an isomorphism.
  We do not use these facts in this paper.
\end{remark}

\section{Homotopy groups of \texorpdfstring{$\TOP(4,1)$}{TOP(4,1)} and \texorpdfstring{$\TOP(4,1)/\TOP(4,2)$}{TOP(4,1)/TOP(4,2)}}
\label{section:topological-stiefel-spaces}

The goal of this section is to prove the following results.

\begin{lemma}
  \label{lemma:pi2TOP(4_1)}
  $\pi_2\TOP(4,1)=0$.
\end{lemma}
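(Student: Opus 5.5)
The plan is to compare $\TOP(4,1)$ with its smooth analogue $O(3)$, which is the subgroup of $O(4)$ fixing $\R^1$ pointwise. The comparison uses the fiber sequences
\[
  O(3)\to O(4)\to S^3,
  \qquad
  \TOP(4,1)\to\TOP(4)\to\TOP(4)/\TOP(4,1).
\]
The first is the standard fibration for $O(4)\to O(4)/O(3)\cong S^3$. In the second, the base is the space of (germs at the origin of) locally flat embeddings of $\R^1$ into $\R^4$ through $0$, on which $\TOP(4)$ acts transitively with stabilizer $\TOP(4,1)$. The inclusions $O(k)\subset\TOP(k)$ give a map of fiber sequences. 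From the long exact sequence of the topological one we obtain
\[
  \pi_3\TOP(4)\to\pi_3\bigl(\TOP(4)/\TOP(4,1)\bigr)\to\pi_2\TOP(4,1)\to\pi_2\TOP(4).
\]
So it suffices to prove three things: $\pi_2\TOP(4)=0$; the base $\TOP(4)/\TOP(4,1)$ receives a map from $S^3$ that is surjective on $\pi_3$; and $\pi_3\TOP(4)\to\pi_3(\TOP(4)/\TOP(4,1))$ is surjective.

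For $\pi_2\TOP(4)$, I would invoke Quinn's theorem (Freedman--Quinn, Theorem~8.7A) that $\TOP(4)/O(4)\to\TOP/O$ is $5$-connected. Since $\pi_k(\TOP/O)=0$ for $k\le 2$, it follows that $\pi_k(\TOP(4)/O(4))=0$ for $k\le 2$. The long exact sequence of $O(4)\to\TOP(4)\to\TOP(4)/O(4)$ then gives that $\pi_2O(4)=0\to\pi_2\TOP(4)$ is surjective, hence $\pi_2\TOP(4)=0$.

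Surjectivity in the third requirement then follows formally. The composite $O(4)\to\TOP(4)\to\TOP(4)/\TOP(4,1)$ factors as $O(4)\to S^3\to\TOP(4)/\TOP(4,1)$. The first map $O(4)\to S^3$ is surjective on $\pi_3$, since it has a section by the unit quaternions. So once $S^3\to\TOP(4)/\TOP(4,1)$ is surjective on $\pi_3$, the map $\pi_3\TOP(4)\to\pi_3(\TOP(4)/\TOP(4,1))$ is surjective. Exactness then forces $\pi_2\TOP(4,1)=0$.

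The main obstacle is the second requirement: identifying $\TOP(4)/\TOP(4,1)$ in low degrees, specifically showing $S^3\to\TOP(4)/\TOP(4,1)$ is surjective on $\pi_3$. I would use the Alexander-trick/radial rescaling argument to identify germs of locally flat lines through $0$ with pairs of disjoint locally flat rays from $0$ to infinity, and then with a space of neat arcs in $S^3\times[0,1]$ (or in $D^4$ with both ends on $\partial D^4$). The endpoints of the arc are recorded by an evaluation map to the configuration space of two distinct points of $S^3$, which is homotopy equivalent to $S^3$. I would show this evaluation map is a fibration and that its fiber is a rel-boundary arc embedding space of the form $\Emb^t(D^1,D^4)$. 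That space is contractible by the result of Salvatore--Turchin combined with \cref{remark:collar-boundary-conditions}, exactly as used in the proof of \cref{lemma:pi1-arcs-comparison}. This would give a weak equivalence $\TOP(4)/\TOP(4,1)\simeq S^3$ compatible with $O(4)/O(3)\cong S^3$. The delicate points are the parametrized radial-rescaling step and the fact that $\TOP(4)\to\TOP(4)/\TOP(4,1)$ is a genuine fibration; for the latter I would use parametrized isotopy extension (Siebenmann, Edwards--Kirby), as in the restriction fibrations of \cref{subsection:embedding-spaces}.
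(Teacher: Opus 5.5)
\textbf{Verdict.} The outer skeleton of your proof matches the paper's, but the step you rely on for surjectivity on $\pi_3$ asserts something false, so there is a genuine gap.

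\textbf{What agrees with the paper.} You use the same exact sequence $\pi_3\TOP(4)\to\pi_3V^{\TOP}_{4,1}\to\pi_2\TOP(4,1)\to\pi_2\TOP(4)$. You get $\pi_2\TOP(4)=0$ from Quinn and Kirby--Siebenmann; only $3$-connectivity of $\TOP(4)/\O(4)\to\TOP/\O$ is needed, and that is what the paper cites. You also make the same reduction through $\O(4)$ to surjectivity of $\pi_3S^3\to\pi_3V^{\TOP}_{4,1}$.

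\textbf{The false step.} You propose to get that surjectivity from a weak equivalence $S^3\cong\O(4)/\O(3)\to\TOP(4)/\TOP(4,1)$, and this map is \emph{not} a weak equivalence. Let $F$ be its homotopy fiber, and let $\cK$ be the space of smooth embeddings $D^1\hookrightarrow D^4$ standard near $\partial D^1$. Smoothing theory for embeddings gives $\Omega F\simeq\cK$: this is the Salvatore--Turchin identification used in the paper, combined with Kurata's weak equivalence $V^{\PL}_{4,1}\simeq V^{\TOP}_{4,1}$. Budney shows $\pi_2\cK\cong\Z$, so $F$ is not weakly contractible.

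\textbf{Where your argument breaks.} It fails when you pass from germs to neat arcs in $D^4$. Radial rescaling can at best identify germs at $0$ with proper locally flat lines through $0$ in $\R^4$, topologized with no control at infinity. Such lines need not extend to neat arcs in $D^4$, and in a family the knotting can escape to infinity (dually, accumulate at $0$). Equivalently, consider the restriction map from neat arcs through $0$ to their germs at $0$. Its fiber is, roughly, the space of pairs of arcs in $D^4\sm\inte(\epsilon D^4)\cong S^3\times I$ running from fixed points on the inner sphere to free points on $\partial D^4$. Your computation that neat locally flat arcs through $0$ form a space equivalent to $S^3$ is correct. Combined with the previous paragraph, it shows this fiber is not contractible. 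The Alexander trick contracts $\Emb^t(D^1,D^4)$ only because those arcs are anchored on $\partial D^4$; a germ has no such anchor. The same identification run in the smooth category would make $S^3$ equivalent to the space of smooth neat arcs through $0$, whose $\pi_2$ contains $\pi_2\cK\neq0$.

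\textbf{What is actually needed.} Since $\pi_2S^3=0$, the exact sequence of $F\to S^3\to V^{\TOP}_{4,1}$ shows that surjectivity on $\pi_3$ is equivalent to $\pi_2F=\pi_1\cK=0$. So the missing idea is a genuinely smooth input: Budney's theorem that $\cK$ is simply connected, transported to $\TOP$ via smoothing theory and Kurata's theorem. This is exactly how the paper proceeds.
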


\begin{lemma}
  \label{lemma:pi_2TOP(4_1)/TOP(4_2)}
  The natural map
  \[
    S^2 \cong \O(3)/\O(2) \to \TOP(4,1)/\TOP(4,2)
  \]
  induces an isomorphism on $\pi_2$.
  In particular, $\pi_2(\TOP(4,1)/\TOP(4,2))\cong\Z$.
\end{lemma}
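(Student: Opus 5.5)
The plan is to compare the fibration $\TOP(4,2)\to\TOP(4,1)\to\TOP(4,1)/\TOP(4,2)$ with its smooth/linear analogue $\O(2)\to\O(3)\to\O(3)/\O(2)\cong S^2$, using the natural inclusions $\O(3)\hookrightarrow\TOP(4,1)$ and $\O(2)\hookrightarrow\TOP(4,2)$, which act on the normal coordinates. These are compatible with the fiber sequences. I will take as given that $\TOP(4,1)\to\TOP(4,1)/\TOP(4,2)$ is a fibration, or at least a quasifibration with fiber $\TOP(4,2)$. This follows from local contractibility of the relevant homeomorphism groups (Edwards--Kirby, Siebenmann) applied to the space of locally flat germs of $2$-planes containing $\R^1$. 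Comparing the two long exact sequences gives
\[
  \pi_2\TOP(4,1)\to\pi_2(\TOP(4,1)/\TOP(4,2))\xrightarrow{\partial}\pi_1\TOP(4,2)\to\pi_1\TOP(4,1),
\]
mapped from
\[
  \pi_2\O(3)=0\to\pi_2 S^2\xrightarrow{\partial}\pi_1\O(2)\to\pi_1\O(3).
\]
In the smooth row, $\partial\colon\Z\to\Z$ is injective with image $2\Z$, since $\pi_1\SO(2)=\Z\to\pi_1\SO(3)=\Z/2$ is surjective.

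\textbf{Identifying $\TOP(4,2)$ and $\pi_1\TOP(4,1)$.} Since $\TOP(4,2)$ fixes $\R^2$ pointwise, it is a group of homeomorphisms of $\R^2\times\R^2$ over the first factor near the fixed plane. By the codimension-two result of Kirby--Siebenmann, that locally flat codimension-two embeddings have normal microbundles that are vector bundles, the map $\O(2)\to\TOP(4,2)$ is a weak equivalence, at least on $\pi_0$ and $\pi_1$. So $\pi_1\TOP(4,2)\cong\Z$, generated by the linear rotation. Next, $\pi_1\TOP(4,1)$ is computed via $\TOP(4,1)\simeq\TOP(3)$-type arguments, or via $\TOP(4,1)\to\TOP(4)$ with fiber related to $\TOP(3)\simeq\O(3)$ (Hatcher). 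The outcome I expect is $\pi_1\TOP(4,1)\cong\Z/2$ with the map from $\pi_1\O(3)$ an isomorphism.

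\textbf{Main obstacle: $\pi_2\TOP(4,1)=0$ (\cref{lemma:pi2TOP(4_1)}).} Once this is known, the top row begins with $0$. Then $\pi_2(\TOP(4,1)/\TOP(4,2))\to\pi_1\TOP(4,2)\cong\Z$ is injective, with image the kernel of $\Z\to\pi_1\TOP(4,1)\cong\Z/2$, which is $2\Z$. The five lemma applied to the square of injections $\partial$ then shows that the comparison map $\pi_2S^2\to\pi_2(\TOP(4,1)/\TOP(4,2))$ is an isomorphism onto a group isomorphic to $\Z$.

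To prove $\pi_2\TOP(4,1)=0$, I would use the fibration $\TOP(4,1)\to\TOP(4)\to\TOP(4)/\TOP(4,1)$. By parametrized isotopy extension for a line in $\R^4$, the base $\TOP(4)/\TOP(4,1)$ is the space of locally flat germs of arcs through the origin, which is a topological analogue of $S^3$. The arc-embedding contractibility $\Emb^t(D^1,D^4)\simeq *$ used in \cref{lemma:pi1-arcs-comparison} should identify this base with $S^3$ up to low-dimensional homotopy, compatibly with $\O(4)/\O(3)$. Alternatively, I would first restrict to the boundary sphere and use Alexander-trick-type arguments to reduce to $\TOP(3)\simeq\O(3)$ via Hatcher's Smale conjecture, which gives $\pi_2=0$. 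Then $\pi_2\TOP(4,1)\cong\pi_2\TOP(4)$ modulo a $\pi_3S^3$ boundary term. Quinn's result $\pi_2\TOP(4)=\pi_2\O(4)=0$ finishes the argument; here is where I expect the real difficulty, in making the known low-degree computations of $\TOP(4)$ apply and in checking the boundary map.
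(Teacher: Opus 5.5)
Your ladder of exact sequences and the diagram chase are the paper's, and citing \cref{lemma:pi2TOP(4_1)} for $\pi_2\TOP(4,1)=0$ is legitimate. The gap is the right-hand input.

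\textbf{Injectivity on $\pi_1$.} Surjectivity of $\pi_2(\O(3)/\O(2))\to\pi_2(\TOP(4,1)/\TOP(4,2))$ needs $\ker\bigl(\pi_1\TOP(4,2)\to\pi_1\TOP(4,1)\bigr)$ to correspond to $\ker\bigl(\pi_1\O(2)\to\pi_1\O(3)\bigr)=2\Z$. So you need $\pi_1\O(3)\to\pi_1\TOP(4,1)$ to be injective. Otherwise the generator of $\pi_1\TOP(4,2)$ could die in $\pi_1\TOP(4,1)$, the image of $\partial$ would be all of $\Z$, and the comparison would only be an index-two inclusion.

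You only say you \emph{expect} $\pi_1\TOP(4,1)\cong\Z/2$ with $\pi_1\O(3)$ mapping isomorphically, and the routes you indicate do not deliver this. An equivalence $\TOP(4,1)\simeq\TOP(3)$ is not something you can quote. In the fibration $\TOP(4,1)\to\TOP(4)\to V^{\TOP}_{4,1}$ the fiber is $\TOP(4,1)$ itself, not something ``related to $\TOP(3)$''. Reading off $\pi_1\TOP(4,1)$ from it would require $\pi_1V^{\TOP}_{4,1}=\pi_2V^{\TOP}_{4,1}=0$, which needs the same smoothing-theory input (Kurata, Salvatore--Turchin, Budney) as \cref{lemma:pi2TOP(4_1)}.

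The missing observation is that the full computation is unnecessary. The composite $\pi_1\O(3)\to\pi_1\TOP(4,1)\to\pi_1\TOP(4)$ equals $\pi_1\O(3)\to\pi_1\O(4)\to\pi_1\TOP(4)$. This is an isomorphism because $\pi_1\O(3)\cong\pi_1\O(4)$ and $\TOP(4)/\O(4)$ is $2$-connected (Quinn plus Kirby--Siebenmann, the same input you already use for $\pi_2\TOP(4)=0$). Hence $\pi_1\O(3)\to\pi_1\TOP(4,1)$ is injective and the four-lemma chase closes. This is exactly how the paper argues.

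\textbf{Two smaller points.} For $\pi_1\O(2)\cong\pi_1\TOP(4,2)$, existence of normal bundles for codimension-two locally flat submanifolds is a $\pi_0$-type statement about individual embeddings. What you need is the parametrized statement that $\TOP(4,2)/\O(2)$ is $2$-connected, which the paper takes from Salvatore--Turchin.

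Your own sketch of \cref{lemma:pi2TOP(4_1)} also does not close as written. With $\pi_2\TOP(4)=0$, $\pi_2\TOP(4,1)$ is the cokernel of $\pi_3\TOP(4)\to\pi_3V^{\TOP}_{4,1}$, and contractibility of topological arcs alone does not give surjectivity. The paper gets surjectivity of $\pi_3V^{\O}_{4,1}\to\pi_3V^{\TOP}_{4,1}$ from three inputs:
\begin{itemize}
\item Budney's result that smooth long arcs in $D^4$ form a simply connected space;
\item Salvatore--Turchin's identification of that space with $\Omega F^{\PL}_{4,1}$;
\item Kurata's equivalence $V^{\PL}_{4,1}\simeq V^{\TOP}_{4,1}$.
\end{itemize}
It then lifts through the surjection $\pi_3\O(4)\to\pi_3V^{\O}_{4,1}$.
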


In the proof, we use Stiefel spaces.
For $\CAT = \O$, $\PL$ and $\TOP$, define the $\CAT$-Stiefel space by $V^{\CAT}_{n,m}=\CAT(n)/\CAT(n,m)$, where $\O(n)$ is the orthogonal group, $\PL(n)$ and $\TOP(n)$ are the groups of PL homeomorphisms and homeomorphisms of $\R^n$ fixing the origin, respectively, and $\CAT(n,m)\subset \CAT(n)$ is the subgroup fixing $\R^m\subset\R^n$ pointwise.
Note that $\CAT(n,0)=\CAT(n)$ and $\O(n,m) \cong \O(n-m)$.
The natural homomorphisms $\O(n) \to \PL(n) \to \TOP(n)$ induce comparison maps $V^{\O}_{n,m} \to V^{\PL}_{n,m} \to V^{\TOP}_{n,m}$.

We remark that the Stiefel spaces also admit the standard germ and simplicial models, which have the same weak homotopy types as the models we used above.

The natural maps form a homotopy fiber sequence $\CAT(n,m) \to \CAT(n) \to V^{\CAT}_{n,m}$.
For $\CAT=\O$, this follows since $\O(n-m)\subset \O(n)$ is a closed Lie subgroup.
For $\CAT=\PL$,~$\TOP$, it follows from \cite[Appendix~I, Theorems~4.10 and~4.14]{Burghelea-Lashof-Rothenberg:1975-1} using simplicial models.

In addition, a natural inclusion $\CAT(n,m)\subset \CAT'(n',m')$ gives rise to a homotopy fiber sequence $\CAT(n,m)\to \CAT'(n',m') \to \CAT'(n',m')/\CAT(n,m)$, and thus to an associated homotopy long exact sequence \cite[Section~3.4, Remark~3.7]{Salvatore-Turchin:2026-1}.

\begin{proof}[Proof of \cref{lemma:pi2TOP(4_1)}]
  Let $F^{\PL}_{4,1}$ be the homotopy fiber of $V^{\O}_{4,1}\to V^{\PL}_{4,1}$.
  By \cite[Theorem~2.3, equation~(2.4)]{Salvatore-Turchin:2026-1}, $\Omega F^{\PL}_{4,1}$ is weakly homotopy equivalent to the space of smooth embeddings $D^1\hookrightarrow D^4$ agreeing with the standard inclusion near~$\partial D^1$.
  This space is simply connected by \cite[Proposition~3.9(2)]{Budney:2008-1}.
  Thus $\pi_2F^{\PL}_{4,1}=0$.
  By the homotopy long exact sequence of $F^{\PL}_{4,1}\to V^{\O}_{4,1}\to V^{\PL}_{4,1}$, it follows that $\pi_3V^{\O}_{4,1}\to\pi_3V^{\PL}_{4,1}$ is surjective.

  By \cite[Theorem~1(2)]{Kurata:1976-1} and the preceding model identifications, the comparison map $V^{\PL}_{4,1}\to V^{\TOP}_{4,1}$ is a weak homotopy equivalence.
  By compatibility of the comparison maps, it follows that the map $\pi_3 V^{\O}_{4,1}\to \pi_3 V^{\TOP}_{4,1}$ is surjective.

  Since $\pi_2\O(3) = 0$, the homotopy long exact sequence of $\O(3) \to \O(4) \to V^{\O}_{4,1}$ shows that $\pi_3 \O(4) \to \pi_3 V^{\O}_{4,1}$ is surjective.
  Thus, the composition $\pi_3 \O(4) \to \pi_3 V^{\O}_{4,1} \to \pi_3 V^{\TOP}_{4,1}$ is surjective.
  From the commutative diagram
  \[
    \begin{tikzcd}
      \pi_3\O(4) \ar[r] \ar[d]
      & \pi_3\TOP(4) \ar[d]\\
      \pi_3V^{\O}_{4,1} \ar[r]
      & \pi_3V^{\TOP}_{4,1}
    \end{tikzcd}
  \]
  it follows that $\pi_3\TOP(4) \to \pi_3V^{\TOP}_{4,1}$ is surjective.

  We also need the fact that $\pi_2\TOP(4)=0$.
  Indeed, the natural map $\TOP(4)/\O(4)\to\TOP/\O$ is $3$-connected \cite[Corollary~2.2.3]{Quinn:1982-1}, and $\TOP/\O$ is $2$-connected \cite[Essay~V, Theorem~(**)]{Kirby-Siebenmann:1977-1}.
  It follows that $\TOP(4)/\O(4)$ is $2$-connected.
  Since $\pi_2\O(4)=0$, the homotopy long exact sequence of $\O(4) \to \TOP(4) \to \TOP(4)/\O(4)$ gives $\pi_2\TOP(4)=0$.

  The homotopy exact sequence of $\TOP(4,1)\to\TOP(4)\to V^{\TOP}_{4,1}$ contains
  \[
    \pi_3 \TOP(4) \to \pi_3 V^{\TOP}_{4,1} \to \pi_2 \TOP(4,1) \to \pi_2 \TOP(4).
  \]
  The first map is surjective and the last group is zero.
  It follows that $\pi_2\TOP(4,1)=0$.
\end{proof}

\begin{proof}[Proof of \cref{lemma:pi_2TOP(4_1)/TOP(4_2)}]
  Consider the following commutative diagram:
  \[
    \begin{tikzcd}[column sep=small]
      0=\pi_2 \O(3) \ar[r] &
      \pi_2 (\O(3) / \O(2)) \ar[r] \ar[d] &
      \pi_1 \O(2) \ar[r] \ar[d] &
      \pi_1 \O(3) \ar[d]
      \\
      0=\pi_2 \TOP(4,1) \ar[r] &
      \pi_2 (\TOP(4,1) / \TOP(4,2)) \ar[r] &
      \pi_1 \TOP(4,2) \ar[r] &
      \pi_1 \TOP(4,1)
    \end{tikzcd}
  \]

  Note that $\pi_2 \TOP(4,1) = 0$ by Lemma~\ref{lemma:pi2TOP(4_1)}.
  It is known that $\TOP(4,2)/\O(2)$ is 2-connected; see~\cite[proof of Theorem~2.1]{Salvatore-Turchin:2026-1}.
  In particular, $\pi_1 \O(2) \to \pi_1 \TOP(4,2)$ is an isomorphism.
  The map $\pi_1\O(3)\to\pi_1\TOP(4,1)$ is injective, since its composition with the map $\pi_1\TOP(4,1)\to\pi_1\TOP(4)$ agrees with the isomorphism
  \[
    \pi_1\O(3)\xrightarrow{\cong}\pi_1\O(4)\xrightarrow{\cong}\pi_1\TOP(4).
  \]
  Here, the second map is an isomorphism since $\TOP(4)/\O(4)$ is $2$-connected by \cite[Corollary~2.2.3]{Quinn:1982-1} and \cite[Essay~V, Theorem~5.3(**)]{Kirby-Siebenmann:1977-1}.

  By exactness and the above two assertions, the leftmost vertical map is an isomorphism.
\end{proof}

\section{Embeddings of surfaces into 4-manifolds}
\label{section:embeddings-of-surfaces}

\subsection{Light bulb smoothing and isotopy of disks}

In this section, we will use the following embedding versions of two results in~\cite{Cha-Kim:2023-1}.

\begin{theorem}[Light bulb smoothing for embeddings {\cite[Theorem~C]{Cha-Kim:2023-1}}]
  \label{theorem:light-bulb-smoothing-for-embedding}
  Let $M$ be a smooth 4-manifold, and $f\colon S \hookrightarrow M$ be a neat topological embedding of a surface $S$ such that $f|_{\partial S}\colon \partial S\hookrightarrow \partial M$ is smooth and has a geometric dual in~$\partial M$.
  Then $f$ is topologically isotopic rel $\partial S$ to a smooth embedding.
\end{theorem}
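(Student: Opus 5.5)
The plan is to deduce this embedding version from \cite[Theorem~C]{Cha-Kim:2023-1}. That theorem is a statement about locally flat surfaces as submanifolds: a neat locally flat surface whose boundary is a smooth link with a geometric dual in $\partial M$ is topologically isotopic rel boundary to a smooth surface. The extra work is to keep track of the parametrization.

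First, I would normalize $f$ near the boundary. Since $L=f|_{\partial S}$ is smooth and $f$ is neat and locally flat, a topological collar uniqueness argument gives a small topological isotopy rel $\partial S$ after which $f$ agrees near $\partial S$ with a smooth collar embedding $\partial S\times[0,\epsilon)\hookrightarrow M$ extending~$L$. This is in the spirit of the collar-insertion argument cited in \cref{remark:collar-boundary-conditions}.

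Next, I would apply \cite[Theorem~C]{Cha-Kim:2023-1} to the locally flat surface $f(S)$. This yields a topological isotopy rel $\partial$ from $f(S)$ to a smooth neat surface $\Sigma\subset M$, which can be arranged to be stationary near $\partial M$. By topological isotopy extension \cite{Edwards-Kirby:1971-1}, it is induced by an ambient isotopy $h_t$ of $M$ rel a neighborhood of $\partial M$ with $h_1(f(S))=\Sigma$. Then $h_t\circ f$ is a topological isotopy rel $\partial S$ from $f$ to $g=h_1\circ f$. The map $g$ is a homeomorphism of $S$ onto the smooth surface $\Sigma$, and near $\partial S$ it is a diffeomorphism onto a collar of $\partial\Sigma$.

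The remaining step is to isotope the parametrization $g\colon S\to\Sigma$ within $\Sigma$, rel a neighborhood of $\partial S$, to a diffeomorphism. For this I would use the classical smoothing of homeomorphisms of $2$-manifolds (Munkres, Hatcher): a homeomorphism between smooth surfaces that is already smooth near a closed subset is isotopic, rel a smaller neighborhood of that subset, to a diffeomorphism. Composing this isotopy $\phi_t\colon S\to\Sigma$ with the smooth inclusion $\Sigma\subset M$ gives a locally flat isotopy of embeddings rel $\partial S$. It ends at a smooth embedding $S\hookrightarrow M$ extending~$L$. Concatenating with $h_t\circ f$ finishes the argument.

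I expect the main obstacle to be the boundary bookkeeping: ensuring that the isotopy of \cite[Theorem~C]{Cha-Kim:2023-1} and the surface-homeomorphism smoothing can both be taken rel a neighborhood of the boundary, not just rel $\partial S$ pointwise, so that the relative smoothing theorem applies. I would handle this by first arranging the smooth collar as above. I would then shrink collars at each stage, invoking the collar-relative versions of isotopy extension. If needed, I would use the weak equivalences of \cref{remark:collar-boundary-conditions} to pass between the pointwise and collar boundary conditions.
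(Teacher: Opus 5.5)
Your proposal is correct and follows the paper's route: apply \cite[Theorem~C]{Cha-Kim:2023-1} to the image $f(S)$, transport $f$ along the resulting ambient isotopy, and then isotope the homeomorphism $S\to\Sigma$ rel $\partial S$ to a diffeomorphism. The paper does not need your collar normalization or the ``stationary near $\partial M$'' arrangement: it uses only that $h_1\circ f$ restricts to the diffeomorphism $L$ on $\partial S$, together with the fact that a surface homeomorphism which is a diffeomorphism on the boundary is topologically isotopic rel boundary to a diffeomorphism.
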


Indeed, \cite[Theorem~C]{Cha-Kim:2023-1} gives the analogous statement for unparametrized surfaces in~$M$:
if $S \subset M$ is a topological neat surface such that $\partial S$ is smooth and has a geometric dual in~$\partial M$, then $S$ is topologically isotopic rel $\partial S$ to a smooth surface in~$M$.
\Cref{theorem:light-bulb-smoothing-for-embedding} readily follows from this, using the fact that a homeomorphism between surfaces which restricts to a diffeomorphism of the boundary is topologically isotopic rel boundary to a diffeomorphism.

\begin{remark}
  The light bulb smoothing theorem holds for nonorientable surfaces and nonorientable ambient 4-manifolds as well, although the introduction of \cite{Cha-Kim:2023-1} assumes that all manifolds are oriented.
  In fact, the proof of \cite[Theorem~C]{Cha-Kim:2023-1} does not use orientability.
\end{remark}

Similarly, the following is an embedding version of \cite[Corollary~A]{Cha-Kim:2023-1}.

\begin{theorem}[{\cite[Corollary~A]{Cha-Kim:2023-1}} for embeddings]
  \label{theorem:topological=smooth-for-disks}
  Let $M$ be a smooth orientable 4-manifold and $K\colon \partial D^2 \hookrightarrow \partial M$ be a smooth knot which has a geometric dual in~$\partial M$.
  Then the natural map $\pi_0 \Emb^s_K(D^2,M) \to \pi_0 \Emb^t_K(D^2,M)$ is a bijection.
\end{theorem}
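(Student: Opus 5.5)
The plan is to deduce \cref{theorem:topological=smooth-for-disks} from the unparametrized statement \cite[Corollary~A]{Cha-Kim:2023-1}. That result asserts that for smooth neat disks in $M$ with common boundary $K(\partial D^2)$ having a geometric dual, topological isotopy rel boundary implies smooth isotopy rel boundary, and every topological disk with that boundary is topologically isotopic rel boundary to a smooth one. The passage between parametrized embeddings and unparametrized disks uses the following two facts.

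(a) A diffeomorphism of $D^2$ that restricts to the identity on $\partial D^2$ is smoothly isotopic rel boundary to the identity (Smale).

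(b) A homeomorphism of $D^2$ that restricts to the identity on $\partial D^2$ is topologically isotopic rel boundary to the identity (Alexander trick).

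\emph{Surjectivity.} Let $f\in\Emb^t_K(D^2,M)$. By \cref{theorem:light-bulb-smoothing-for-embedding}, $f$ is topologically isotopic rel $\partial D^2$ to a smooth embedding. Alternatively, one can apply \cite[Theorem~C]{Cha-Kim:2023-1} to the image disk $f(D^2)$ to obtain a topological isotopy to a smooth disk $\Delta$. Any parametrization $g$ of $\Delta$ extending $K$ differs from the transported parametrization by a homeomorphism of $D^2$ fixing the boundary, and by (b) this homeomorphism is isotopic to the identity. Hence $[f]$ lies in the image of $\pi_0\Emb^s_K(D^2,M)\to\pi_0\Emb^t_K(D^2,M)$.

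\emph{Injectivity.} Let $f_0,f_1$ be smooth embeddings extending $K$ that are topologically isotopic rel boundary. Their images $\Delta_0=f_0(D^2)$ and $\Delta_1=f_1(D^2)$ are then topologically isotopic rel boundary as unparametrized disks. By \cite[Corollary~A]{Cha-Kim:2023-1} there is a smooth isotopy rel boundary carrying $\Delta_0$ to $\Delta_1$. By smooth isotopy extension it is realized by an ambient diffeotopy $\{h_t\}$ of $M$ rel $\partial M$, so $f_0$ is smoothly isotopic to $h_1\circ f_0$. The embedding $h_1\circ f_0$ has image $\Delta_1$, so $h_1\circ f_0=f_1\circ\phi$ for a diffeomorphism $\phi$ of $D^2$ fixing $\partial D^2$. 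By (a), $\phi$ is smoothly isotopic rel boundary to the identity, and therefore $f_1\circ\phi$ is smoothly isotopic to $f_1$.

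The main point requiring care is a boundary subtlety in the injectivity step. The diffeotopy $h_t$ is only rel $\partial M$, and near $K$ the disks may be carried by a twisting of the normal direction, so $\phi$ may not be exactly the identity near the boundary. To handle this, I would first isotope so that all embeddings agree on a collar, using \cref{remark:collar-boundary-conditions}; after that, (a) applies rel a collar. The orientability hypothesis on $M$ is needed only because \cite[Corollary~A]{Cha-Kim:2023-1} is stated for oriented $M$; choosing an orientation suffices.
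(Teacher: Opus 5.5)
Your proof is correct and follows the route the paper intends. The paper gives no separate argument: it presents this statement, like \cref{theorem:light-bulb-smoothing-for-embedding}, as the parametrized form of the unparametrized \cite[Corollary~A]{Cha-Kim:2023-1}, obtained by comparing parametrizations of a fixed disk as you do.

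One small correction: passing to collar-fixed embeddings does not make $\phi=f_1^{-1}\circ h_1\circ f_0$ the identity near $\partial D^2$, because $h_1$ need not be the identity near $\partial M$. No fix is needed, though. A diffeomorphism of $D^2$ fixing $\partial D^2$ pointwise has inward-pointing normal derivative along $\partial D^2$, so it can be isotoped rel $\partial D^2$ to one that is the identity near $\partial D^2$, after which Smale's theorem applies; hence your fact (a) holds as stated.
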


\subsection{Topological and smooth isotopy of surfaces}

Let $M$ be a smooth $4$-manifold with nonempty boundary, and let $S$ be a compact connected surface, possibly nonorientable, with nonempty boundary.
Let $K\subset S$ be the union of pairwise disjoint smoothly embedded neat arcs, such that cutting $S$ open along $K$ gives a disk.
Note that $K$ has $b_1(S)$ components, where $b_1(S) =  1 - \chi(S)$ is the first Betti number of~$S$.
Fix a bicollar neighborhood $U$ of $K$ in $S$, together with an identification $(U,K)\cong (K\times D^1,K\times 0)$.

To define embedding spaces, regard $S$ and $K$ as triads with $\partial_+S = \emptyset$, $\partial_+K = \emptyset$.
View $U\cong K\times D^1$ as a triad with $\partial_- U = \partial K \times D^1$, $\partial_+U = K\times \partial D^1$. 
Fix a smooth neat embedding $S\hookrightarrow M$, and under this, regard $S$, $K$, and $U$ as subsets of~$M$.

\begin{lemma}
  \label{lemma:embedding-spaces-bicollar-rel-core}
  There is a homotopy commutative diagram
  \[
    \begin{tikzcd}[column sep=normal]
      \Emb^s(U,M\rel K)
      \arrow[r, "\gamma^s", "\simeq"']
      \arrow[d]
      &
      \bigl(\Omega(\O(3)/\O(2))\bigr)^{b_1(S)}
      \arrow[d]
      \\
      \Emb^t(U,M\rel K)
      \arrow[r, "\gamma^t", "\simeq"']
      &
      \bigl(\Omega(\TOP(4,1)/\TOP(4,2))\bigr)^{b_1(S)},
    \end{tikzcd}
  \]
  where the horizontal maps $\gamma^s$ and $\gamma^t$ are weak homotopy equivalences and the vertical maps are the natural maps.
\end{lemma}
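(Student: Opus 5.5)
We plan to reduce the statement to a single arc and then to a pointwise local model. Since $U\cong K\times D^1$ has components $U_i\cong K_i\times D^1$, and embeddings rel $K$ of distinct components can be separated by shrinking into disjoint tubular neighborhoods of the $K_i$, we first show that
\[
\Emb^c(U,M\rel K)\simeq\prod_i\Emb^c(U_i,M\rel K_i)
\]
for $c=s,t$. By \cref{remark:collar-boundary-conditions}, we may work with collar versions throughout.

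The basic step is a germ or shrinking argument. The inclusion of embeddings of a thinner bicollar $K\times[-\epsilon,\epsilon]$, rel $K$, into a fixed tubular neighborhood $N\cong K\times\R^3$ of $K$ is a weak equivalence onto the whole space. The reason is that any compact family can be radially shrunk toward $K$. In the smooth case this is an honest radial scaling. In the topological case we instead use the canonical local flatness of families along $K$, via parametrized isotopy extension. After this reduction, $\Emb^c(U_i,M\rel K_i)$ is weakly equivalent to the space of germs along $K_i\times 0$ of bicollars, rel endpoints (with collar condition), inside $K_i\times\R^3$.

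Next we identify this germ space with a section space. Transport the $\R^4$-chart along the arc. At each point $x\in K_i$, a germ of a locally flat half-plane pair containing the line $\R^1=K_i$ direction is a point of $\TOP(4,1)/\TOP(4,2)$: an element of $\TOP(4,1)$, which fixes the arc direction, carries the standard $\R^2\supset\R^1$ to the bicollar germ, and it is well defined up to $\TOP(4,2)$. In the smooth case the same construction gives $\O(3)/\O(2)$, via the normal direction of the bicollar in the normal $\R^3$; more precisely, one passes through $\Diff(4,1)/\Diff(4,2)\simeq\O(3)/\O(2)$. Since the boundary condition fixes the germ at both endpoints, the space of such families over $K_i\cong I$ is the space of paths in the Stiefel space with fixed ends, i.e.\ $\Omega(\CAT(4,1)/\CAT(4,2))$. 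Formally, we would set this up as a fibration over the space of parametrized arcs, or directly through the simplicial models of \cite[Appendix~I]{Burghelea-Lashof-Rothenberg:1975-1}, which identify embedding spaces of tubes with spaces of sections of the associated bundle with fiber $\CAT(4,1)/\CAT(4,2)$. This defines $\gamma^s$ and $\gamma^t$. Homotopy commutativity of the square follows because both constructions are natural with respect to the inclusion $\O\to\TOP$ of structure groups, and the comparison map on fibers is the natural map $\O(3)/\O(2)\to\TOP(4,1)/\TOP(4,2)$ of \cref{lemma:pi_2TOP(4_1)/TOP(4_2)}.

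The main obstacle is the topological identification $\gamma^t$. It requires that germs of locally flat codimension-two bicollars along a fixed locally flat arc form a bundle over the arc with fiber $V=\TOP(4,1)/\TOP(4,2)$, and that passing from germs to actual embeddings rel $K$ loses nothing. The first thing we would try is to use the homotopy fiber sequences $\TOP(4,2)\to\TOP(4,1)\to V$ from \cite{Burghelea-Lashof-Rothenberg:1975-1}, together with the parametrized local flatness of families (\cite{Edwards-Kirby:1971-1}, \cite[Theorem~6.17]{Siebenmann:1972-1}), to trivialize a neighborhood of $K_i$ uniformly over simplices. Once that is done, the comparison map to the path space of $V$ becomes a Kan fibration with contractible fibers.
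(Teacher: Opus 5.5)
Your overall shape matches the paper: split over the components $U_i$, shrink toward the core, identify each factor with loops on $\CAT(4,1)/\CAT(4,2)$, and compare via $\O\to\TOP$. In the smooth case your identification can also be justified directly, since normal derivatives plus rescaling deform germs onto linear data and give $\Omega(\O(3)/\O(2))$. The gap is the step you flag yourself, the construction of $\gamma^t$, and your sketch does not supply the missing input.

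The sentence ``at each point the germ is a point of $\TOP(4,1)/\TOP(4,2)$, so the space of such families over $K_i$ is the path space'' conflates two different objects:
\begin{itemize}
\item a germ of bicollars along $K_i$, which is coherent (an honest embedding of a neighborhood of $K_i$);
\item a continuous path of pointwise germs, which is only formal data.
\end{itemize}
There is a natural map from the first to the second, but its being a weak equivalence is precisely an h-principle, namely topological immersion theory. It does not follow from pointwise local flatness, from trivializing a neighborhood of $K_i$ uniformly over simplices, or from the fiber sequence $\TOP(4,2)\to\TOP(4,1)\to\TOP(4,1)/\TOP(4,2)$. Your final claim that the comparison map to the path space has contractible fibers is the same h-principle restated. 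Appealing to the simplicial models of Burghelea--Lashof--Rothenberg for a section-space description of germs along an arc also presupposes it. The homotopy commutativity of the square likewise rests on this unconstructed map.

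The paper supplies the missing idea as follows.
\begin{enumerate}
\item It shrinks $U$ into disjoint product neighborhoods $W_i$. This is a reparametrization of the domain toward $K\cup\partial_-U$ in both categories, so the parametrized isotopy extension you invoke for the topological shrinking is unnecessary.
\item It passes from embeddings to immersions. A family of immersions that is the identity on $K$ and fixed on a boundary collar is an embedding on a smaller neighborhood of $K\cup\partial_-U$. Hence $\Emb^c_{\coll}(U,W\rel K)\to\Imm^c_{\coll}(U,W\rel K)$ is a weak equivalence.
\item It restricts to the open interiors $L=(0,1)\times 0\subset P=(0,1)\times(-1,1)\subset Q=(0,1)\times\R^3$ with end conditions. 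It then applies the topological Smale--Hirsch theorem (Rourke--Sanderson) separately to $P$ and to $L$. This gives $\Imm^t_{\mathrm{end}}(P,Q)\simeq\Omega V^t_{4,2}$ and $\Imm^t_{\mathrm{end}}(L,Q)\simeq\Omega V^t_{4,1}$, with restriction corresponding to $\Omega q_t$.
\item The restriction fibration for immersions then identifies the rel-$L$ space with $\Omega\hofiber_*(q_t)\simeq\Omega(\TOP(4,1)/\TOP(4,2))$.
\end{enumerate}
This route needs only the standard end-relative immersion theorem and a homotopy-fiber computation, rather than a bespoke relative germ h-principle. The known smooth--topological compatibility of the Smale--Hirsch maps (Salvatore--Turchin) then gives the homotopy commutativity of the square.
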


\begin{proof}
  We work in the pointed homotopy category.
  We first pass to collar-fixed embedding spaces, then to immersion spaces, and finally compute their homotopy types using immersion theory.

  By \cref{remark:collar-boundary-conditions}, there is a natural weak homotopy equivalence $\Emb^c_{\coll}(U,M\rel K) \to \Emb^c(U,M\rel K)$ which is compatible with smooth-to-topological comparison.
  Therefore, it suffices to show the conclusion for $\Emb^c_{\coll}(U,M\rel K)$ in place of $\Emb^c(U,M\rel K)$, $c=s$, $t$.

  For $c=s$,~$t$, let $\Imm^c_{\coll}(X,Y\rel A)$ denote the realization of the simplicial model of immersions, with the same collar and pointwise relative conditions as $\Emb^c_{\coll}(X,Y\rel A)$.
  A $q$-simplex is a level-preserving $\Delta^q$-family of immersions $F\colon X\times \Delta^q \looparrowright Y\times \Delta^q$;
  $F$ is smooth for $c=s$, and $F$ is locally a locally flat family of embeddings for $c=t$.

  Let $d=b_1(S)$, and write $K=K_1\sqcup\cdots\sqcup K_d$ and $U=U_1\sqcup\cdots\sqcup U_d$.
  Write $\id$ for the reference inclusions $K\subset U \subset M$.
  Choose disjoint smooth neighborhoods $W_i$ of $U_i$ with product identifications $(W_i,U_i,K_i)\cong I\times(\R^3,D^1\times0,0)$ compatible with collars of $M$, $U_i$, and~$K_i$.
  Let $W=\bigsqcup_iW_i$.
  Shrinking $U$ toward $K\cup\partial_-U$ gives weak homotopy equivalences
  \[
    \Emb^c_{\coll}(U,M\rel K) \xleftarrow{\simeq} \Emb^c_{\coll}(U,W\rel K)
    \xrightarrow{\simeq}
    \Imm^c_{\coll}(U,W\rel K).
  \]
  Indeed, any finite collection of $\Delta^q$-families has a common fixed boundary collar and a common neighborhood of $K\cup\partial_-U$ mapped to~$W$;
  a family of immersions restricts to embeddings on a sufficiently small such neighborhood, cf.~\cite[p.~145]{Lashof:1976-1}.

  Observe that $\Imm^c_{\coll}(U,W\rel K)$ is the product of $\Imm^c_{\coll}(U_i,W_i\rel K_i)$.
  We will compute each factor.

  Let $P=(0,1)\times(-1,1)$, $Q=(0,1)\times\R^3$ and $L=(0,1)\times0$, and view them as subsets of $W_i = I\times\R^3$.
  For $X=P,L$, write $\Imm^c_{\mathrm{end}}(X,Q)$ for the realization of the analogous simplicial immersion model requiring agreement with $\id$ at all points $(t,x)$ with $t\in (0,\epsilon)\cup(1-\epsilon,1)$, for some $\epsilon>0$ uniform over each parameter simplex.
  The notations rel~$K_i$ and rel~$L$ additionally mean, respectively, that $K_i$ and $L$ are fixed pointwise.
  Write $V^s_{4,r}=V^{\O}_{4,r}$ and $V^t_{4,r}=V^{\TOP}_{4,r}$, and let $q_c\colon V^c_{4,2}\to V^c_{4,1}$ be the natural map.

  For $c=s$, $t$, consider the following homotopy commutative diagram:
  \[
    \begin{tikzcd}[
        column sep=small,
        row sep=4ex,
      ]
      \Imm^c_{\coll}(U_i,W_i\rel K_i) \ar[r] \ar[d,"\simeq"']
      & \Imm^c_{\coll}(U_i,W_i) \ar[r] \ar[d,"\simeq"']
      & \Imm^c_{\coll}(K_i,W_i) \ar[d,"\simeq"']
      \\
      \Imm^c_{\mathrm{end}}(P,Q\rel L) \ar[r] \ar[d,"\simeq"']
      & \Imm^c_{\mathrm{end}}(P,Q) \ar[r] \ar[d,"\simeq"']
      & \Imm^c_{\mathrm{end}}(L,Q) \ar[d,"\simeq"']
      \\
      \Omega\hofiber_*(q_c) \ar[r]
      & \Omega V^c_{4,2} \ar[r,"\Omega q_c"]
      & \Omega V^c_{4,1}.
    \end{tikzcd}
  \]

  The vertical arrows from the first row are restrictions to the interiors.
  These are homotopy equivalences.
  For the right arrow, extension by $\id_{\partial K_i}$ gives an inverse.
  For the middle and left arrows, fix $0<a<1$ and define $D_a(t,y)=(t,ay)$ on $I\times\R^3$.
  A homotopy inverse sends $f$ to the map $(t,x)\mapsto D_{1/a}(f(t,ax))$ for $(t,x)\in (0,1)\times D^1$, extended by~$\id_{\partial I\times D^1}$.
  Varying the scale from $a$ to $1$ in the two compositions gives the required homotopies.

  The second row is a homotopy fiber sequence by the restriction theorem for immersions relative to the prescribed end regions;
  in the topological category, see \cite[Appendix, Theorem~3]{Rourke-Sanderson:1970-1}.
  Consequently, the first row is also a homotopy fiber sequence.

  The relative Smale--Hirsch theorem and its topological analogue give a weak homotopy equivalence from $\Imm^c_{\mathrm{end}}(P,Q)$ to the corresponding formal immersion space; see \cite{Hirsch:1959-1} and \cite[Appendix, Corollary~1]{Rourke-Sanderson:1970-1}.
  The formal immersion space is weakly homotopy equivalent to $\Map((I,\partial I), (V^c_{4,2},*)) \simeq \Omega V^c_{4,2}$.
  This gives the weak homotopy equivalence $\Imm^c_{\mathrm{end}}(P,Q) \to \Omega V^c_{4,2}$.
  The weak homotopy equivalence $\Imm^c_{\mathrm{end}}(L,Q) \to \Omega V^c_{4,1}$ is defined similarly.
  These equivalences identify restriction with $\Omega q_c$ up to pointed homotopy.
  Therefore, comparison of the homotopy fibers of the second and third rows shows that $\Imm^c_{\mathrm{end}}(P,Q\rel L)\to\Omega\hofiber_*(q_c)$ is a weak homotopy equivalence.
  Composing this with the upper left vertical arrow gives
  \[
    \Imm^c_{\coll}(U_i,W_i\rel K_i) \simeq \Omega\hofiber_*(q_c).
  \]

  For $c=s$ and $t$, $\hofiber_*(q_c)$ is weakly homotopy equivalent to $\O(4,1)/\O(4,2)\cong\O(3)/\O(2)$ and $\TOP(4,1)/\TOP(4,2)$, respectively.
  Taking products over the components of $K$ gives $\gamma^s$ and $\gamma^t$.

  All involved maps are compatible up to pointed homotopy with smooth-to-topological comparison; see \cite[Section~6.1 and Remark~7.4]{Salvatore-Turchin:2026-1} for the immersion-theoretic compatibility.
  Thus, the induced map on homotopy fibers agrees with the natural map $\O(3)/\O(2)\to\TOP(4,1)/\TOP(4,2)$ up to pointed homotopy.
  Looping and taking products give the asserted homotopy commutativity.
\end{proof}

\begin{lemma}
  \label{lemma:pi1-embedding-bicollar-comparison}
  The natural map
  \[
    \pi_1\Emb^s(U,M\rel K) \to \pi_1\Emb^t(U,M\rel K)
  \]
  is an isomorphism.
  In particular,
  \[
    \pi_1\Emb^s(U,M\rel K) \cong \pi_1\Emb^t(U,M\rel K) \cong \Z^{b_1(S)}.
  \]
\end{lemma}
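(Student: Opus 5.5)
This follows directly from \cref{lemma:embedding-spaces-bicollar-rel-core} combined with \cref{lemma:pi_2TOP(4_1)/TOP(4_2)}. Apply $\pi_1$ to the homotopy commutative square of \cref{lemma:embedding-spaces-bicollar-rel-core}, based at the reference inclusions. Since $\gamma^s$ and $\gamma^t$ are weak homotopy equivalences, they induce isomorphisms on $\pi_1$. Homotopy commutativity, taken in the pointed homotopy category as in that proof, means the square of $\pi_1$ groups commutes strictly. Hence the natural map $\pi_1\Emb^s(U,M\rel K)\to\pi_1\Emb^t(U,M\rel K)$ is identified with the map
\[
  \pi_1\bigl(\Omega(\O(3)/\O(2))\bigr)^{b_1(S)} \to \pi_1\bigl(\Omega(\TOP(4,1)/\TOP(4,2))\bigr)^{b_1(S)}
\]
induced by the loop of the natural map $\O(3)/\O(2)\to\TOP(4,1)/\TOP(4,2)$, taken factorwise.

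Next, use $\pi_1(\Omega X)^d\cong(\pi_2X)^d$, naturally in pointed maps $X\to Y$. Under this adjunction isomorphism the map above becomes the $b_1(S)$-fold product of
\[
  \pi_2(\O(3)/\O(2))\to\pi_2(\TOP(4,1)/\TOP(4,2)).
\]
By \cref{lemma:pi_2TOP(4_1)/TOP(4_2)} this map is an isomorphism. A product of isomorphisms is an isomorphism, so the comparison map on $\pi_1$ is an isomorphism.

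For the computation, $\O(3)/\O(2)\cong S^2$ gives $\pi_2(\O(3)/\O(2))\cong\Z$. Therefore $\pi_1\Emb^s(U,M\rel K)\cong\Z^{b_1(S)}$, and likewise for the topological space via the isomorphism just established.

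The only point needing care is basepoints. The vertical maps must send the basepoint (the reference inclusion) to the basepoint, and the homotopy making the square commute must be pointed, so that the induced maps on $\pi_1$ really commute and not merely up to conjugation. The proof of \cref{lemma:embedding-spaces-bicollar-rel-core} works in the pointed homotopy category, so this holds. In any case, $\Omega X$ is an H-space, so its $\pi_1$ is abelian and a free homotopy would already suffice.
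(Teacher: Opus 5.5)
Your proposal is correct and follows the paper's argument: the paper says the lemma follows immediately from the homotopy commutative square of weak equivalences in \cref{lemma:embedding-spaces-bicollar-rel-core} together with the $\pi_2$-isomorphism of \cref{lemma:pi_2TOP(4_1)/TOP(4_2)}. You spell out what the paper leaves implicit, namely the identification $\pi_1(\Omega X)\cong\pi_2X$, the factorwise product, and the basepoint bookkeeping, and your remark that abelianness of $\pi_1$ of a loop space makes a free homotopy sufficient is also correct.
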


\begin{proof}
  The conclusion follows immediately from Lemma~\ref{lemma:embedding-spaces-bicollar-rel-core} and Lemma~\ref{lemma:pi_2TOP(4_1)/TOP(4_2)}.
\end{proof}

\begin{lemma}
  \label{lemma:pi0-surface-rel-core-injective}
  Suppose that $M$ is orientable and that $\partial S$ has a geometric dual in~$\partial M$.
  Then the natural map
  \begin{equation}
    \pi_0\Emb^s(S,M\rel K) \to \pi_0\Emb^t(S,M\rel K)
    \label{equation:pi0_S_M_relK}
  \end{equation}
  is injective.
\end{lemma}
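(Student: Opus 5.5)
Two smooth embeddings of $S$ rel $K$ are compared through the restriction fiber sequence to the bicollar $U$. The homotopy exact sequences are
\[
  \pi_1\Emb^c(U,M\rel K) \xrightarrow{\partial} \pi_0\Emb^c(S,M\rel U) \to \pi_0\Emb^c(S,M\rel K) \to \pi_0\Emb^c(U,M\rel K)
\]
for $c=s$, $t$, and they are linked by the comparison maps. The target of the final map, as a set with basepoint, is identified by \cref{lemma:embedding-spaces-bicollar-rel-core} with $\pi_0(\Omega(\O(3)/\O(2)))^{b_1(S)}=0$ in the smooth case. In the topological case it is $\pi_1(\TOP(4,1)/\TOP(4,2))^{b_1(S)}$, and the same homotopy exact sequence as in the proof of \cref{lemma:pi_2TOP(4_1)/TOP(4_2)} shows this group is trivial: $\pi_0\TOP(4,2)\to\pi_0\TOP(4,1)$ is injective, and $\pi_1\TOP(4,2)\cong\pi_1\O(2)\cong\Z$ surjects onto the relevant image.

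Take smooth $f_0,f_1\in\Emb^s(S,M\rel K)$ that become equal in $\pi_0\Emb^t(S,M\rel K)$. Since $\pi_0\Emb^s(U,M\rel K)$ is trivial, I first smoothly isotope both rel $K$ until they agree with $\id$ on $U$. Exactness then lifts them to classes $x_0,x_1\in\pi_0\Emb^s(S,M\rel U)$. Their images in $\pi_0\Emb^t(S,M\rel U)$ lie in the same orbit of the action of $\pi_1\Emb^t(U,M\rel K)$ through the boundary map, meaning they differ by "spinning the bicollar" around the core loops. By \cref{lemma:pi1-embedding-bicollar-comparison}, every topological element $\beta$ is the comparison image of a smooth $\beta^s$. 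The action is natural with respect to the comparison maps, so acting by $\beta^s$ on $x_0$ inside the smooth space reduces the problem to showing that $\pi_0\Emb^s(S,M\rel U)\to\pi_0\Emb^t(S,M\rel U)$ is injective.

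Next, cut along $U$. Let $D=\overline{S\sm U}$, a disk whose boundary is $\partial S\sm\partial_-U$ together with the two copies $K\times\partial D^1$. Embeddings of $S$ rel $U$ are the same as embeddings of $D$ into $M'=\overline{M\sm\nu U}$ with prescribed boundary, where $\nu U$ is a small tubular neighborhood of $U$ restricted to $U$. For this I would use the collar models of \cref{remark:collar-boundary-conditions}, in both categories, to push $D$ off $U$ and identify the spaces up to weak equivalence. The new boundary knot $\partial D\subset\partial M'$ is smooth, and $M'$ is orientable. Injectivity then follows from \cref{theorem:topological=smooth-for-disks}, provided $\partial D$ has a geometric dual in $\partial M'$.

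The step I expect to be the main obstacle is producing this dual. The geometric dual $G$ of $\partial S$ in $\partial M$ meets $\partial S$ in one point $p$. I would choose the arcs $K$ so that their endpoints avoid $p$ and $G$; this is harmless because isotopy classes of the choice do not matter, or alternatively one can move $G$ slightly. Then $G$ lies in $\partial M\sm\nu(\partial U)\subset\partial M'$ and meets $\partial D$ exactly once at $p$, since the new boundary pieces $K\times\partial D^1$ are pushed into the interior part of $\partial M'$ near $\nu U$ and are disjoint from $G$. The care needed is in rounding corners and matching the collar conventions. Once that is in place, \cref{theorem:topological=smooth-for-disks} gives the required injectivity and completes the argument.
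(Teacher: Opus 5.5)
Your proposal is correct and is essentially the paper's proof. It uses the same restriction fibration $\Emb^c(S,M\rel U)\to\Emb^c(S,M\rel K)\to\Emb^c(U,M\rel K)$, the same four-lemma argument based on $\pi_0\Emb^s(U,M\rel K)=0$ and the $\pi_1$-surjectivity of \cref{lemma:pi1-embedding-bicollar-comparison}, and the same reduction to \cref{theorem:topological=smooth-for-disks} by cutting $S$ open to a disk after moving $G$ off the ends of $K$. The paper cuts out a small neighborhood $\nu(K)$ of $K$ rather than of $U$, so the fixed strip $\overline{U\sm\nu(K)}$ serves as the collar for regluing.

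Two small corrections. Your claim that $\pi_0\Emb^t(U,M\rel K)$ is trivial is never used, and the paper's lemmas do not support it (it would need $\pi_1\TOP(4,2)\to\pi_1\TOP(4,1)$ to be surjective), so drop it. Also, since $K$ is fixed in the statement, perturb $G$ rather than re-choosing $K$.
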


\begin{proof}
  For $c=s$,~$t$, restriction to $U$ yields a fibration
  \[
    \Emb^c(S,M\rel U) \to \Emb^c(S,M\rel K) \to \Emb^c(U,M\rel K).
  \]
  The associated homotopy long exact sequences form the following commutative diagram.
  \[
    \begin{tikzcd}[
        column sep=9pt,
        ampersand replacement=\&,
        cells={nodes={inner xsep=1.5pt}}
      ]
      \pi_1\Emb^s(U,M\rel K) \ar[r] \ar[d]
      \& \pi_0\Emb^s(S,M\rel U) \ar[r] \ar[d]
      \& \pi_0\Emb^s(S,M\rel K) \ar[r] \ar[d]
      \& \pi_0\Emb^s(U,M\rel K) \ar[d]
      \\
      \pi_1\Emb^t(U,M\rel K) \ar[r]
      \& \pi_0\Emb^t(S,M\rel U) \ar[r]
      \& \pi_0\Emb^t(S,M\rel K) \ar[r]
      \& \pi_0\Emb^t(U,M\rel K)
    \end{tikzcd}
  \]

  \begin{claim}
    The comparison map $\pi_0\Emb^s(S,M\rel U) \to \pi_0\Emb^t(S,M\rel U)$ is a bijection.
  \end{claim}

  To verify the claim, choose a geometric dual sphere $G\subset\partial M$ whose intersection with $\partial S$ avoids $\partial K$.
  Choose a sufficiently small smooth closed tubular neighborhood $\nu(K)$ adapted to $S$, disjoint from $G$, and satisfying $\nu(K)\cap S\subset\inte_S(U)$.
  Let $X=\overline{M \sm \nu(K)}$ be the exterior.
  Let $D = \overline{S\sm \nu(K)} = S\cap X$, which is a disk, and let $N=\overline{U\sm\nu(K)}$.

  Define $\alpha^c\colon \Emb^c(D,X\rel N)\to \Emb^c(S,M \rel U)$ by $f\mapsto f\cup \id_{S\cap \nu(K)}$, for $c=s$,~$t$.
  This induces a bijection on~$\pi_0$.
  For an embedding $f \in \Emb^c(S,M \rel U)$, radial expansion near $K$ (and corresponding reparametrization of~$S$) isotopes $f$ rel $\partial S\cup U$ to an embedding $f'$ such that $f'|_D\in\Emb^c(D,X\rel N)$.
  The surjectivity on $\pi_0$ follows from this.
  The same construction for an isotopy gives injectivity on~$\pi_0$.

  Let $\beta^c\colon \Emb^c(D,X\rel N) \to \Emb^c(D,X)$ be the natural forgetful map.
  Since $N$ is a union of collars of the new boundary arcs of $D$, the arguments of \cref{remark:collar-boundary-conditions} show that $\beta^c$ induces a bijection on~$\pi_0$.
  (In fact, it can also be shown that both $\alpha^c$ and $\beta^c$ are weak homotopy equivalences, although we do not use it here.)

  Now we have the following commutative diagram in which horizontal maps are bijections:
  \[
    \begin{tikzcd}
      \pi_0 \Emb^s(S,M \rel U) \ar[d]
      & \pi_0 \Emb^s(D,X\rel N) \ar[l,"\alpha^s_*"',"\approx"] \ar[r,"\beta^s_*","\approx"'] \ar[d]
      & \pi_0 \Emb^s(D,X) \ar[d]
      \\
      \pi_0 \Emb^t(S,M \rel U)
      & \pi_0 \Emb^t(D,X\rel N) \ar[l,"\alpha^t_*","\approx"'] \ar[r,"\beta^t_*"',"\approx"]
      & \pi_0 \Emb^t(D,X)
    \end{tikzcd}
  \]

  The hypothesis that $\partial S$ has a geometric dual implies that $\partial D$ has a geometric dual in~$\partial X$.
  Therefore, the rightmost vertical map $\pi_0\Emb^s(D, X) \to \pi_0\Emb^t(D, X)$ is a bijection, by \cref{theorem:topological=smooth-for-disks}.
  The claim follows from this.

  By Lemma~\ref{lemma:embedding-spaces-bicollar-rel-core},
  $\Emb^s(U,M\rel K)\simeq (\Omega S^2)^{b_1(S)}$, and thus $\pi_0 \Emb^s(U,M\rel K)=0$.

  By Lemma~\ref{lemma:pi1-embedding-bicollar-comparison}, $\pi_1\Emb^s(U,M\rel K) \to \pi_1\Emb^t(U,M\rel K)$ is surjective.

  Using these facts, a five-lemma-type argument yields that \cref{equation:pi0_S_M_relK} is injective.
\end{proof}

Now we state the main result.

\begin{theorem}
  \label{theorem:pi0-surface-comparison}
  Let $M$ be a smooth orientable 4-manifold with nonempty boundary, and let $S\subset M$ be a compact connected smoothly embedded neat surface with nonempty boundary.
  Suppose that $\partial S$ has a geometric dual in~$\partial M$.
  Then $\pi_0\Emb^s(S,M)\to\pi_0\Emb^t(S,M)$ is a bijection.
\end{theorem}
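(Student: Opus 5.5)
Surjectivity follows directly from \cref{theorem:light-bulb-smoothing-for-embedding}: every locally flat embedding of $S$ extending the inclusion $\partial S\subset\partial M$ is topologically isotopic rel $\partial S$ to a smooth one. So the plan is to prove injectivity. For this I will use the restriction fibration to the core arcs, for $c=s,t$:
\[
  \Emb^c(S,M\rel K)\to\Emb^c(S,M)\to\Emb^c(K,M).
\]
The comparison maps give a commutative ladder between the smooth and topological homotopy long exact sequences. The relevant piece is
\[
  \pi_1\Emb^c(K,M)\to\pi_0\Emb^c(S,M\rel K)\to\pi_0\Emb^c(S,M)\to\pi_0\Emb^c(K,M).
\]
Two facts are already available. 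By \cref{lemma:pi1-arcs-comparison} the comparison map on $\pi_1\Emb(K,M)$ is surjective. By \cref{lemma:pi0-arcs-comparison} it is injective on $\pi_0\Emb(K,M)$. By \cref{lemma:pi0-surface-rel-core-injective} it is injective on $\pi_0\Emb(S,M\rel K)$.

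The main step is a diagram chase, done carefully because these are pointed sets with a group action rather than groups. Let $f_0,f_1\in\Emb^s(S,M)$ be smooth embeddings that are topologically isotopic.
\begin{enumerate}
  \item Restrict to $K$. The arcs $f_0|_K$ and $f_1|_K$ are then topologically isotopic, so by \cref{lemma:pi0-arcs-comparison} they are smoothly isotopic rel $\partial K$.
  \item By smooth isotopy extension, applied to the ambient manifold rel $\partial M$, replace $f_1$ by a smoothly isotopic embedding with $f_1|_K=f_0|_K$.
  \item Re-base all spaces at $f_0$ rather than at the reference inclusion. The lemmas apply equally, since $f_0$ is itself a smooth neat embedding satisfying the same hypotheses.
  \item Now $f_0$ and $f_1$ both lie in the fiber $\Emb^c(S,M\rel K)$ and become equal in $\pi_0\Emb^t(S,M)$. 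By exactness, which here means the orbits of the $\pi_1\Emb^t(K,M)$-action on $\pi_0$ of the fiber, there is $\beta\in\pi_1\Emb^t(K,M)$ with $\beta\cdot[f_0]=[f_1]$ in $\pi_0\Emb^t(S,M\rel K)$.
  \item By \cref{lemma:pi1-arcs-comparison}, lift $\beta$ to some $\beta^s\in\pi_1\Emb^s(K,M)$.
  \item Naturality of the action gives that $\beta^s\cdot[f_0]$ and $[f_1]$ have the same image in $\pi_0\Emb^t(S,M\rel K)$.
  \item Injectivity from \cref{lemma:pi0-surface-rel-core-injective} then gives $\beta^s\cdot[f_0]=[f_1]$ in $\pi_0\Emb^s(S,M\rel K)$. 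Hence $f_0$ and $f_1$ are smoothly isotopic in $\Emb^s(S,M)$.
\end{enumerate}

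I expect the main obstacle to be justifying the action-level exactness and its compatibility with the comparison maps. The claims needed are these:
\begin{itemize}
  \item two elements of the fiber's $\pi_0$ map to the same element of $\pi_0$ of the total space exactly when they lie in the same $\pi_1$-orbit of the base;
  \item the smooth-to-topological comparison maps are equivariant for this action.
\end{itemize}
I would derive both from the fact that the restriction maps are Serre fibrations: the action is given by path lifting, and the comparison maps commute with the lifts up to homotopy. This compatibility was stated for the restriction fiber sequences in \cref{subsection:embedding-spaces}.

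A secondary concern is that \cref{lemma:pi0-surface-rel-core-injective} must be applied with basepoint $f_0$. This is legitimate because its proof only uses that $f_0$ is a smooth neat embedding with $\partial S$ having a geometric dual. The geometric dual condition depends only on $\partial S$, which is unchanged.
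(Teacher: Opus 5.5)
Your proposal is correct and follows the paper's proof. Surjectivity comes from \cref{theorem:light-bulb-smoothing-for-embedding}, and injectivity comes from a five-lemma-type chase in the ladder of homotopy exact sequences for the restriction fibration to the core arcs $K$, using \cref{lemma:pi1-arcs-comparison,lemma:pi0-surface-rel-core-injective,lemma:pi0-arcs-comparison} rebased at an arbitrary smooth embedding, with your explicit orbit-level chase being exactly what the paper's ``five-lemma-type argument'' at every basepoint amounts to.
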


Note that surjectivity follows from~\cref{theorem:light-bulb-smoothing-for-embedding}.
Also, the special case of $S=D^2$ is~\cref{theorem:topological=smooth-for-disks}.

\begin{proof}[Proof of injectivity]
  Consider the homotopy long exact sequences associated with the restriction fibration
  \[
    \Emb^c(S,M \rel K) \to \Emb^c(S,M) \to \Emb^c(K,M)
  \]
  for $c=s$,~$t$.
  These form the following commutative diagram.
  \[
    \begin{tikzcd}[column sep=small]
      \pi_1\Emb^s(K,M) \ar[r] \ar[d]
      & \pi_0\Emb^s(S,M \rel K) \ar[r] \ar[d]
      & \pi_0\Emb^s(S,M) \ar[r] \ar[d,"\Phi"]
      & \pi_0\Emb^s(K,M) \ar[d]
      \\
      \pi_1\Emb^t(K,M) \ar[r]
      & \pi_0\Emb^t(S,M \rel K) \ar[r]
      & \pi_0\Emb^t(S,M) \ar[r]
      & \pi_0\Emb^t(K,M)
    \end{tikzcd}
  \]

  The first (leftmost) vertical map is surjective by Lemma~\ref{lemma:pi1-arcs-comparison}.
  The second vertical map is injective by Lemma~\ref{lemma:pi0-surface-rel-core-injective}.
  The last vertical map is injective by Lemma~\ref{lemma:pi0-arcs-comparison}.
  The preceding lemmas apply with any element of $\Emb^s(S,M)$ as the reference embedding, so the argument applies at every such basepoint.
  Thus, by a five-lemma-type argument, the third vertical map $\Phi$ is injective.
\end{proof}

\begin{proof}[Proof of \cref{theorem:topological-isotopy-implies-smooth-isotopy}]
  It is an immediate consequence of the injectivity in \cref{theorem:pi0-surface-comparison}.
\end{proof}

\paragraph{Unparametrized surfaces in 4-manifolds}

Analogues of \cref{theorem:topological-isotopy-implies-smooth-isotopy,corollary:topological=smooth-for-surfaces} also hold for \emph{unparametrized} embedded surfaces in~$M$.
More precisely, in both the smooth and topological categories, we say that two neat surfaces $S_0$, $S_1 \subset M$ are isotopic if there is an ambient isotopy $\{h_t\colon M \to M \}_{0\le t\le 1}$ with $h_0=\id_M$ and $h_1(S_0)=S_1$. 
We always assume that an isotopy is rel boundary, i.e., $h_t|_{\partial M} = \id_{\partial M}$ for all~$t$.
The following are consequences of~\cref{theorem:topological-isotopy-implies-smooth-isotopy,corollary:topological=smooth-for-surfaces}, obtained using the fact that every homeomorphism of a surface $S$ extending $\id_{\partial S}$ is topologically isotopic rel $\partial S$ to a diffeomorphism.

\begin{corollary}
  \label{corollary:unparametrized-topological-isotopy-implies-smooth-isotopy}
  Let $M$ be a smooth orientable 4-manifold with nonempty boundary.
  For any two smooth neat surfaces in $M$ whose common boundary has a geometric dual in~$\partial M$, topological isotopy implies smooth isotopy.
\end{corollary}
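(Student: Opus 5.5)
Two smooth neat surfaces $S_0,S_1\subset M$ with common boundary $\partial S_0=\partial S_1$ that are topologically isotopic rel boundary via an ambient isotopy $\{h_t\}$ with $h_1(S_0)=S_1$ are given. The plan is to convert this into a statement about parametrized embeddings and apply \cref{theorem:topological-isotopy-implies-smooth-isotopy}. Regard $S=S_0$ as the abstract surface, with reference smooth embedding $\iota_0\colon S\hookrightarrow M$ the inclusion. The composite $h_1\circ\iota_0$ is a locally flat embedding with image $S_1$ and boundary restriction $\id_{\partial S}$, since $h_t$ fixes $\partial M$. Thus $\phi=(h_1\circ\iota_0)\colon S\to S_1$ is a homeomorphism of surfaces extending the identity on $\partial S=\partial S_1$ (after fixing any diffeomorphism $S\cong S_1$ rel boundary, which exists because $S_0$ and $S_1$ are homeomorphic rel boundary and smooth surfaces homeomorphic rel boundary are diffeomorphic rel boundary).

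Next, I will use the fact already invoked in the paper: a homeomorphism between smooth surfaces restricting to a diffeomorphism on the boundary is topologically isotopic rel boundary to a diffeomorphism. Choose a topological isotopy $\{\psi_s\}$ of $S_1$ rel $\partial S_1$ with $\psi_0=\id$ such that $\psi_1\circ\phi=:\iota_1$ is a diffeomorphism onto $S_1$, that is, a smooth neat embedding $S\hookrightarrow M$ with image $S_1$ agreeing with $\iota_0$ on $\partial S$. Then $s\mapsto \psi_s\circ\phi$ concatenated with $t\mapsto h_t\circ\iota_0$ is a locally flat isotopy rel $\partial S$ from $\iota_0$ to $\iota_1$. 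Here $\psi_s$ is a self-homeomorphism of the surface $S_1$, and the isotopy $\psi_s\circ\phi$ stays within $S_1$, so it is a locally flat isotopy of embeddings into $M$. The only care needed is that a family of locally flat embeddings with common image varying by a reparametrization isotopy is a locally flat family in the sense of $\Emb^t$; this holds because it is the composite of the fixed locally flat embedding $S_1\subset M$ with a level-preserving homeomorphism of $S_1\times I$.

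Now $\iota_0$ and $\iota_1$ are smooth embeddings of $S$ agreeing on $\partial S$, whose common boundary has a geometric dual in $\partial M$, and they are topologically isotopic. By \cref{theorem:topological-isotopy-implies-smooth-isotopy}, they are smoothly isotopic rel $\partial S$. Finally, apply smooth isotopy extension rel $\partial M$ to get a smooth ambient isotopy $\{g_t\}$ of $M$ with $g_1\circ\iota_0=\iota_1$, so $g_1(S_0)=S_1$. Compact support and fixing $\partial M$ follow because the isotopy fixes $\partial S$ and is neat; a collar adjustment near $\partial S$ may be needed so that the isotopy is stationary near the boundary.

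The main obstacle is the second step: justifying the topological isotopy of surface homeomorphisms to diffeomorphisms rel boundary, including the nonorientable case. This is classical (Epstein, or via the Alexander trick together with smoothing in dimension two), and the paper already cites this fact, so I would simply invoke it. The remaining steps are formal.
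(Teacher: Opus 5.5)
Your proposal is correct and follows the paper's route. The paper only remarks that this corollary follows from \cref{theorem:topological-isotopy-implies-smooth-isotopy} together with the fact that a surface homeomorphism extending the identity on the boundary is topologically isotopic rel boundary to a diffeomorphism; your steps fill in that sketch faithfully: restrict the ambient isotopy, reparametrize within $S_1$ to reach a smooth parametrization, apply the theorem, and finish with smooth isotopy extension rel $\partial M$.
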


\begin{corollary}
  \label{corollary:unparametrized-topological=smooth-for-surfaces}
  Let $M$ be a smooth orientable 4-manifold with nonempty boundary.
  Fix a smooth link $L\cong S^1\sqcup \cdots \sqcup S^1 \subset \partial M$ that has a geometric dual in~$\partial M$.
  Then the following natural map is a bijection.
  \[
    \left\{
      \begin{array}{@{}c@{}}
        \text{smooth surfaces}\\
        \text{in $M$ bounded by $L$}
      \end{array}
    \right\}
    \Big/
    \begin{array}{@{}c@{}}
      \text{smooth}\\
      \text{isotopy}
    \end{array}
    \;\xrightarrow{\;\approx\;}\;
    \left\{
      \begin{array}{@{}c@{}}
        \text{topological surfaces}\\
        \text{in $M$ bounded by $L$}
      \end{array}
    \right\}
    \Big/
    \begin{array}{@{}c@{}}
      \text{topological}\\
      \text{isotopy}
    \end{array}
  \]
\end{corollary}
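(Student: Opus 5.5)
Corollary~\ref{corollary:unparametrized-topological=smooth-for-surfaces} reduces to the parametrized statement, \cref{corollary:topological=smooth-for-surfaces}, applied one abstract surface at a time. A surface bounded by $L$ is compact, connected and neat, so its diffeomorphism type is determined by orientability, genus and the number of boundary components (the latter equals the number of components of $L$). Smooth and topological isotopy both preserve this type. Hence both sides of the map split as disjoint unions indexed by the same set of surface types $S$, and the map respects the splitting. It therefore suffices to fix $S$ and prove bijectivity on the corresponding pieces.

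Fix $S$. The surfaces of type $S$ bounded by $L$ are the images of embeddings $f\colon S\hookrightarrow M$ with $f|_{\partial S}$ a parametrization of $L$. I would first fix one such boundary parametrization $\ell\colon\partial S\to L$; any other differs by a diffeomorphism of $\partial S$, which extends to a diffeomorphism of $S$ permuting and possibly reversing boundary circles. Then each side is the quotient of $\pi_0\Emb^c_\ell(S,M)$ by the action of the group of (smooth or topological) self-homeomorphisms $\phi$ of $S$ restricting on $\partial S$ to self-maps preserving $\ell$, i.e.\ $\phi|_{\partial S}=\id$. To justify the identification of unparametrized ambient isotopy classes with these quotients, I would use that an isotopy rel boundary of embeddings extends to an ambient isotopy rel $\partial M$ (smooth isotopy extension, and \cite{Edwards-Kirby:1971-1} topologically). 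Conversely, an ambient isotopy carrying $S_0$ to $S_1$ gives an isotopy from $f_0$ to $f_1\circ\phi$ for some homeomorphism $\phi$ of $S$ fixing $\partial S$ pointwise, since $h_t$ fixes $\partial M$.

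Next I would identify the two group actions on $\pi_0$. The key fact is that every homeomorphism of $S$ fixing $\partial S$ is topologically isotopic rel $\partial S$ to a diffeomorphism. Therefore the topological mapping group acts on $\pi_0\Emb^t_\ell(S,M)$ through the image of the smooth one, so the orbits on both sides are orbits of the same group $\pi_0\Diff_\partial(S)$. The comparison map $\pi_0\Emb^s_\ell(S,M)\to\pi_0\Emb^t_\ell(S,M)$ is equivariant and bijective by \cref{corollary:topological=smooth-for-surfaces}. An equivariant bijection induces a bijection on orbit sets, which gives the result.

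The main obstacle is bookkeeping rather than topology. One must check that the identification of unparametrized classes with orbit sets holds in both categories compatibly. In particular, ambient isotopy fixing $\partial M$ pointwise forces $\phi|_{\partial S}=\id$, rather than allowing a rotation of the boundary circles. This uses that a reparametrization of $L$ preserving it setwise but not pointwise is not realized by an ambient isotopy rel $\partial M$. Surjectivity is also immediate here from \cref{theorem:light-bulb-smoothing-for-embedding}.
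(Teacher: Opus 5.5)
Your route is the one the paper indicates; the paper only sketches it. You reduce to the parametrized bijection of \cref{corollary:topological=smooth-for-surfaces}, and use that every homeomorphism of $S$ fixing $\partial S$ is topologically isotopic rel $\partial S$ to a diffeomorphism, so both sides become orbit sets for the same group. The equivariant-bijection step and your identification of ambient isotopy classes with orbits are fine.

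However, the reduction to a single boundary parametrization $\ell$ rests on a false claim: not every diffeomorphism of $\partial S$ extends to a diffeomorphism of $S$. Suppose $S$ is orientable with $k\ge 2$ boundary circles. A self-diffeomorphism of the connected surface $S$ is either orientation preserving or orientation reversing. So its restriction to $\partial S$ either preserves the induced boundary orientation on all circles or reverses it on all circles. For example, the map of $\partial(S^1\times I)$ that is the identity on $S^1\times 0$ and a reflection on $S^1\times 1$ does not extend, even up to isotopy. Consequently, an embedding extending $\ell$ induces a fixed orientation of $L$ up to overall reversal. A surface of type $S$ bounded by $L$ that induces an inequivalent orientation is not the image of any embedding extending $\ell$. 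So your single orbit set of $\pi_0\Emb^c_\ell(S,M)$ does not exhaust the corresponding piece of either side. Since the statement allows $L$ to have several components, this case is in scope.

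The repair is easy. Partition each piece further by the class of the boundary parametrization $\partial S\to L$ modulo boundary restrictions of self-diffeomorphisms of $S$. For orientable $S$ with $k$ boundary circles, these classes are the $2^{k-1}$ orientations of $L$ up to overall reversal, since permutations of boundary circles are always realized. Choose a smooth representative $\ell$ in each class, and then check the following.
\begin{enumerate}
\item Ambient isotopy rel $\partial M$ preserves the class: if $h_t$ carries $f_0(S)$ to $f_1(S)$ with $f_i|_{\partial S}=\ell_i$, then $\phi=f_1^{-1}\circ h_1\circ f_0$ is a self-homeomorphism of $S$ restricting to $\ell_1^{-1}\circ\ell_0$ on $\partial S$.
\item The smooth and topological partitions agree, because a homeomorphism of $S$ whose restriction to $\partial S$ is a diffeomorphism is isotopic rel $\partial S$ to a diffeomorphism.
\item On the topological side, a homeomorphic parametrization $S\to\Sigma$ can be precomposed with a self-homeomorphism of $S$, built using a collar, so that its boundary restriction is exactly the smooth map $\ell$.
\end{enumerate}
With these adjustments, your orbit-set argument applies verbatim to each class and gives the bijection.
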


\sloppy
\bibliographystyle{amsalpha}
\def\MR#1{}
\bibliography{research}

\end{document}